\documentclass[reqno]{amsart}
\usepackage{amsmath,amssymb,latexsym}
\usepackage[T1]{fontenc}
\usepackage{dsfont}
\usepackage{enumerate}

\usepackage{enumerate,xspace}

\usepackage{soul}

\usepackage{mdwlist}

\usepackage{xcolor}

\usepackage{nicefrac}

\usepackage{etoolbox} 
\usepackage{tikz-cd}

\usepackage[pdftex]{hyperref}


\parskip=3pt
\parindent=5pt

\theoremstyle{plain}

\newtheorem{theorem}{Theorem}[section]
\newtheorem{lemma}[theorem]{Lemma}
\newtheorem{proposition}[theorem]{Proposition}
\newtheorem{corollary}[theorem]{Corollary}
\newtheorem{fact}[theorem]{Fact}

\newcounter{proofcount}
\AtBeginEnvironment{proof}{\stepcounter{proofcount}}

\newtheorem*{claim*}{Claim}
\makeatletter                  
\@addtoreset{claim}{proofcount}
\makeatother                   

\newenvironment{claimproof*}[1][Proof of the Claim.] 
{%
	\proof[#1]%
	
}
{%
	\endproof%
}

\theoremstyle{definition}
\newtheorem{definition}[theorem]{Definition}
\newtheorem{remark}[theorem]{Remark}
\newtheorem{example}[theorem]{Example}

\newcommand{\acleq}{\textnormal{acl}^{\textnormal{eq}}}

\newcommand{\tp}{\textnormal{tp}}
\newcommand{\stp}{\textnormal{stp}}
\newcommand{\U}{\textnormal{U}}

\newcommand{\Aut}{\textnormal{Aut}}

\newcommand{\Stab}{\textnormal{Stab}}

\newtheorem*{akn}{Acknowledgments}

\def\Ind#1#2{#1\setbox0=\hbox{$#1x$}\kern\wd0\hbox to 0pt{\hss$#1\mid$\hss}
	\lower.9\ht0\hbox to 0pt{\hss$#1\smile$\hss}\kern\wd0}

\def\ind{\mathop{\mathpalette\Ind{}}}

\def\notind#1#2{#1\setbox0=\hbox{$#1x$}\kern\wd0
	\hbox to 0pt{\mathchardef\nn=12854\hss$#1\nn$\kern1.4\wd0\hss}
	\hbox to 0pt{\hss$#1\mid$\hss}\lower.9\ht0 \hbox to 
	0pt{\hss$#1\smile$\hss}\kern\wd0}

\setcounter{MaxMatrixCols}{20}
\setcounter{subsection}{0}

\title{A (possibly new) structure without the canonical base property}
\author{Michael Loesch}
\date{\today}
\address{ \, Abteilung f\"ur Mathematische Logik, Mathematisches Institut,
  Albert-Ludwigs-Universit\"at Freiburg, Ernst-Zermelo-Stra\ss e 1, D-79104
  Freiburg, Germany}
\thanks{Research supported by the Deutsche Forschungsgemeinschaft (DFG, 
German Research Foundation) - Project number 2100310201, 
part of the ANR-DFG program GeoMod}
\email{loesch@math.uni-freiburg.de}
\keywords{Stability, Internality, Covers, CBP}
\subjclass{03C45}
\begin{document}

\begin{abstract}
In this short note, we introduce a generalization of the canonical base 
property, called transfer of internality on quotients. A structural 
study of groups definable in  
theories with this property yields as a consequence  
infinitely many new uncountably 
categorical additive covers of the complex numbers without the canonical 
base property. 
\end{abstract}

\maketitle

\section{Introduction}

Recall that a stable theory is one-based if the canonical base of 
every strong type $\stp(a/B)$ is algebraic over the realization $a$.
In particular, the tuple $a$ is independent from $B$ over the intersection 
$\acleq(a)\cap \acleq(B)$. This characterization of the geometry of 
forking was later captured by Pillay \cite{aP00}, see also Evans 
\cite{dE03}, in terms of the ample hierarchy.
In particular, one-basedness coincides with 
non-1-ampleness for stable theories. Similarly, non-2-ampleness agrees 
with CM-triviality, a 
notion 
introduced by Hrushovski in his \emph{ab initio} construction \cite{eH93}. 

Hrushovski and Pillay \cite{HP87}
showed that one-basedness has strong consequences for definable (or rather 
interpretable)
groups in stable theories. Among other results, they showed that one-based 
groups are central-by-finite. Later, Pillay proved that CM-trivial groups 
of finite Morley rank are nilpotent-by-finite. 
 
The canonical base property (in short, CBP) provides a different 
generalization of
one-basedness, by replacing the role of algebraicity with almost 
internality to a particular 
$\emptyset$-invariant family of types. The study of groups according 
to this relativization was first considered by  Kowalski and 
Pillay, who showed \cite{KP06} that definable groups in a stable theory 
with the CBP are now central-by-(almost internal). 
Blossier, Martin-Pizarro and Wagner \cite{BMPW12} adapted the ample 
hierarchy to this relative context, which they called \emph{tightness}, 
and proved that a 2-tight group of finite  
Lascar rank is nilpotent-by-(almost 
internal).

If two types
$\stp(b/A_1)$ and $\stp(b/A_2)$ in a stable theory are almost 
internal to a fixed family of types  of 
Lascar rank one, then so is  $\stp(b/\acleq(A_1)\cap \acleq(A_2))$, 
whenever $A_1$ is independent from 
$A_2$ over  $\acleq(A_1)\cap \acleq(A_2)$. The latter always holds for  
one-based stable theories. A remarkable 
result of Chatzidakis \cite{zC12}
(and of Moosa \cite{rM11} under the stronger assumption of the UCBP)
 states
that theories of finite rank with the CBP \emph{transfer 
	$\Sigma$-internality on quotients} with respect to a fixed (but 
	arbitrary) invariant 
	family $\Sigma$ of types of Lascar rank one, that is,  the type 
	$\tp(b/\acleq(A_1)\cap \acleq(A_2))$ 
is almost $\Sigma$-internal, whenever both $\stp(b/A_1)$ and $\stp(b/A_2)$ 
are. 
\begin{figure}[h]  
	\centering
		\begin{tikzcd}
			& b\arrow[dr, "\textnormal{ almost }\Sigma\textnormal{-int.}", dash]  
			\arrow[dl, 
			"\textnormal{ almost }\Sigma\textnormal{-int.}" above left, dash] 
			\arrow[dd, "\textnormal{ almost }\Sigma\textnormal{-int.}", dash, 
			dashed]
			&  \\ A_1\arrow[dr, dash] & &  
			A_2\arrow[dl, 
			dash] \\
			& \acleq(A_1)\cap \acleq(A_2) & 
		\end{tikzcd}
\end{figure} 

In this short note, we will undertake a first structural description of  
groups definable in stable theories of finite 
Lascar rank preserving $\Sigma$-internality on quotients.
As a consequence, we obtain infinitely many new stable (actually 
uncountably categorical)  structures 
 without the CBP, in terms of additive 
covers of the field of complex numbers, motivated by the primordial 
counter-example to the CBP given by Hrushovski, Palac\'in and Pillay 
\cite{HPP13}. 

\begin{akn}
	The author would like to thank his advisor Amador Mart\'in Pizarro for many helpful discussions, his advice, invaluable help and generosity.
\end{akn}

\section{Types, cosets and quotients}\label{S:CPIQ}

We fix a complete stable theory $T$ of finite Lascar rank and work inside a 
sufficiently saturated ambient model of $T$.
Let $\Sigma$ be an $\emptyset$-invariant family of types of finite 
rank. 
Motivated by work of Chatzidakis and of Moosa, we will study 
the following 
generalization of the canonical base property. 

\begin{definition}\label{D:PIQ}
	The theory $T$ \emph{transfers $\Sigma$-internality on quotients} if 
	for every imaginary element $b$ the type
	\[\stp(b/\acleq(A_1)\cap \acleq(A_2))\] 
	is almost $\Sigma$-internal, 
	whenever both $\stp(b/A_1)$ and $\stp(b/A_2)$ are almost 
	$\Sigma$-internal.
\end{definition}

\begin{remark}
	The above property is preserved under naming and forgetting parameters 
	\cite[Corollary 2.9]{mL20}.
	We will see before the Proposition \ref{P:CPIQM}
	that imaginary elements $b$ must be considered in the definition of 
	transfer of internality on quotients, 
	in contrast to other notions in geometric stability theory.	
\end{remark}
	
	Chatzidakis showed that the CBP already implies a seemingly stronger 
	property 
	(called UCBP), which appeared in work of Moosa and Pillay \cite{MP08} 
	on 
	the model theory of compact complex manifolds. 
	A key step in her argument consisted in showing that the CBP implies 
	transfer of internality on quotients (using our terminology).
	
\begin{fact}\label{F:Ch}\cite[Proposition 2.2 \& Lemma 2.3]{zC12}
	Every theory of finite Lascar rank with the CBP (with 
	respect to the family of types of rank one) 
	transfers internality on quotients with respect to any 
	$\emptyset$-invariant 
	family of types of rank one. 	
\end{fact}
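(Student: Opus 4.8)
My plan is to obtain Fact~\ref{F:Ch} from the \CBP by an induction on Lascar rank, after first isolating the steps that use only the elementary calculus of internality. Since transferring $\Sigma$-internality on quotients is insensitive to naming and forgetting parameters (quoted above), I would begin by naming $C:=\acleq(A_1)\cap\acleq(A_2)$ and reducing to the case $\acleq(A_1)\cap\acleq(A_2)=\acleq(\emptyset)$; replacing each $A_i$ by $\acleq(A_i)$ (which changes neither $C$ nor, by monotonicity of internality, the hypotheses), the assertion to prove is: \emph{if $\stp(b/A_1)$ and $\stp(b/A_2)$ are almost $\Sigma$-internal, then so is $\stp(b/\acleq(\emptyset))$}.

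Three facts about almost $\Sigma$-internality make no reference to the \CBP: it is monotone in the base; it is transitive (if $\stp(a/E)$ and $\stp(c/Ea)$ are almost $\Sigma$-internal, so is $\stp(ac/E)$); and it \emph{descends along independence}, i.e.\ if $\stp(d/E)$ is almost $\Sigma$-internal with $E_0=\acleq(E_0)\subseteq\acleq(E)$ and $d\ind_{E_0}E$, then $\stp(d/E_0)$ is almost $\Sigma$-internal. Applying descent to the canonical bases $c_i:=\acleq(\Cb(\stp(b/A_i)))$, for which $b\ind_{c_i}A_i$, shows each $\stp(b/c_i)$ is almost $\Sigma$-internal; and since $\acleq(c_1)\cap\acleq(c_2)\subseteq\acleq(A_1)\cap\acleq(A_2)=\acleq(\emptyset)$, nothing is lost by replacing $A_i$ by $c_i$. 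Thus I may assume $A_i=\acleq(\Cb(\stp(b/A_i)))$, which in particular has finite Lascar rank (being algebraic over a finite Morley sequence in $\stp(b/A_i)$).

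The \CBP now enters through these canonical bases. It tells us that $\stp(A_i/b)$ is almost internal to the family $\Sigma_0$ of \emph{all} types of rank one, and the crucial step --- this is the content of Lemma~2.3 of \cite{zC12}, and the point I expect to be the main obstacle --- is to upgrade this to: $\stp(A_i/b)$ is almost $\Sigma$-internal. To see this one takes a Morley sequence $(b_j)_{j<n}$ realizing $\stp(b/A_i)$ over $A_i$ with $A_i\subseteq\dcleq(b_0,\dots,b_{n-1})$; each $\stp(b_j/A_i)$ is almost $\Sigma$-internal, so $A_i$ is almost $\Sigma$-internal over $b_0,\dots,b_{n-1}$, and feeding this ``$\Sigma$-internality over the $b_j$'s'' into the ``$\Sigma_0$-internality over $b$'' supplied by the \CBP, a finite-rank analysis forces the rank-one types needed to witness internality of $A_i$ over $b$ to already lie in $\Sigma$. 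After this step we have, for $i=1,2$, that both $\stp(b/A_i)$ and $\stp(A_i/b)$ are almost $\Sigma$-internal.

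It remains to assemble these into $\stp(b/\acleq(\emptyset))$ almost $\Sigma$-internal, which I would do by an induction on $\U(b/\emptyset)$: the case $\U(b/\emptyset)=0$ is trivial, and the case $\U(b/\emptyset)=1$ reduces to descent and monotonicity (if $b\nind A_i$ for both $i$ then $b\in\acleq(A_1)\cap\acleq(A_2)=\acleq(\emptyset)$, which is absurd, so $b\ind A_i$ for some $i$ and $\Cb(\stp(b/A_i))\subseteq\acleq(\emptyset)$). For larger rank I would peel off a non-algebraic $b_0\in\dcleq(b)$ with $\U(b_0/\emptyset)=1$ (the first step of a $\Sigma_0$-analysis of $\stp(b/\acleq(\emptyset))$): the induction hypothesis at the lower rank, applied to $b_0$ with the pair $(A_1,A_2)$, shows $\stp(b_0/\acleq(\emptyset))$ is almost $\Sigma$-internal, after which one relativises over $\acleq(b_0)$ and recurses on $\stp(b/\acleq(b_0))$ (of strictly smaller rank), gluing with transitivity of internality. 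The delicate bookkeeping here --- controlling the intersection $\acleq(b_0A_1)\cap\acleq(b_0A_2)$ once $b_0$ has been adjoined, and invoking Lemma~2.3 once more to absorb the ``new'' shared elements --- together with the $\Sigma_0$-to-$\Sigma$ upgrade of Lemma~2.3 itself, is where I expect the real work of the proof to concentrate.
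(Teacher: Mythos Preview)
The paper does not prove Fact~\ref{F:Ch}; it is stated with a citation to \cite[Proposition~2.2 \& Lemma~2.3]{zC12} and used as a black box, so there is no proof in the paper to compare your proposal against. Your sketch is a plausible reconstruction of Chatzidakis' argument (reduction to canonical bases, the $\Sigma_0$-to-$\Sigma$ upgrade of her Lemma~2.3, and an inductive descent on rank), and you have correctly identified where the real work lies.

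One point to watch if you carry this out: in your induction step you ``peel off a non-algebraic $b_0\in\dcleq(b)$ with $\U(b_0/\emptyset)=1$'', but no such $b_0$ need exist in $\dcleq(b)$---finite Lascar rank gives an \emph{analysis} in rank-one types, not a coordinatisation by rank-one elements of the definable closure. You will need to work with $b_0\in\acleq(b)$ (or with a quotient) and, as you already anticipate, control the growth of $\acleq(b_0A_1)\cap\acleq(b_0A_2)$ after adjoining $b_0$; this is exactly the bookkeeping that Chatzidakis' Proposition~2.2 handles, and you should consult her paper directly rather than reinvent it.
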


As pointed out in the introduction, Chatzidakis' result (Fact \ref{F:Ch}) can be seen as a generalization of a general phenomenon for stable theories. 

\begin{remark}\label{R:IndepQuotients}
	 Stable theories transfer internality on \emph{independent quotients} with respect to any 
	$\emptyset$-invariant family $\Sigma'$ of types of rank one: 
	
	The type $\stp(b/\acleq(A_1)\cap\acleq(A_2))$ is almost 
	$\Sigma'$-internal, whenever
	the types $\stp(b/A_1)$ and 
	$\stp(b/A_2)$ are almost $\Sigma'$-internal and
	\[
	A_1 \ind_{\acleq(A_1)\cap\acleq(A_2)} A_2.
	\] In particular, one-based stable theories transfer $\Sigma'$-internality on quotients.
\end{remark}
\begin{proof}
	For simplicity of notation, we assume that $\acleq(A_1)\cap\acleq(A_2)=\emptyset$. By almost internality,
	there are sets of parameters $C_1$ and $C_2$ with
	\[  C_1 \ind_{A_1} b, A_2 \ \  \textnormal{ and  } \ \ C_2 \ind_{A_2} 
	b, C_1 \]
	such that $b$ is algebraic over $C_1,e_1$ and over $C_2,e_2$, where 
	$e_1$ and $e_2$ are tuples of realizations of types (each one based 
	over $C_i$) in $\Sigma'$. Note that $C_1 \ind C_2$. Choosing a maximal 
	subtuple $e_{1}'$ of $e_1$ which is independent from $C_2$ over $C_1$,
	we get that $e_1$ is algebraic over $C_1,e_{1}',C_2$ and $C_1,e_{1}' \ind C_2$.
	Since the type $\stp(e_1 / C_1,e_{1}')$ is almost $\Sigma'$-internal, we deduce from \cite[Lemma 1.7]{zC12}
	that $e_{1}$ is algebraic over $C_1,e_{1}',(C_2)_{\emptyset}^{\Sigma'}$, where $(C_2)_{\emptyset}^{\Sigma'}$ is the
	maximal almost $\Sigma'$-internal subset of $\acleq(C_2)$.
	Considering a maximal subtuple $e_{2}'$ of $e_2$ with
	\[ e_{2}' \ind_{C_2} C_1 , e_{1}',\]
	we similarly deduce that $e_{2}$ is algebraic over 
	$(C_1,e_{1}')_{\emptyset}^{\Sigma'}, 
	e_{2}',C_2$ and the 
	independence 
	\[ C_1 , e_{1} \ind_{(C_1 e_{1}')_{\emptyset}^{\Sigma'}, 
		(C_2)_{\emptyset}^{\Sigma'}} 
	C_2 , e_{2} \]
	follows. We conclude that $b$ is algebraic over $(C_1 
	e_{1}')_{\emptyset}^{\Sigma'}, 
	(C_2)_{\emptyset}^{\Sigma'}$, so the type $\stp(b)$ is 
	almost 
	$\Sigma'$-internal, as desired.
\end{proof}

We will now investigate structural properties of type-definable groups 
under transfer of  
$\Sigma$-internality 
on quotients, so
for the rest of this section fix a type-definable group $G$ over 
$\emptyset$. Given a type-definable 
subgroup $H$, the coset $aH$ can be seen as the hyperimaginary $[a]_{E}$, 
where $E(x,y)$ is the type-definable equivalence relation given by 
$x^{-1}y\in H$. 
Since stable theories eliminate hyperimaginaries, there is a (possibly 
infinite) tuple $\ulcorner aH\urcorner$ of imaginaries
which is interdefinable with the class $[a]_{E}$. This process is uniform with respect to $a$, 
since every type-definable group in a stable theory is an 
intersection of definable ones.

The generic elements of $G$ are those having maximal Lascar 
rank, which we refer to as the Lascar rank of $G$. For elements $a$ and 
$b$ 
independent over some set $C$, we will frequently use the following 
equivalence \[  b\cdot a \ind_C a \quad \Longleftrightarrow \quad 
\U( b\cdot a/C)=\U(b/C), \tag{$\clubsuit$}\]
 whilst $\U(b\cdot 
a/C)\geq\U(b/C)$ always holds, since $\U(b/C)=\U(b/C,a)$. In particular, 
if the element $a$ is generic over $b$, so is $a\cdot b$ generic over 
$b$. Ziegler 
noticed a sort of converse:

\begin{fact}\label{F:ZieglersLemma} \textup{\big(}\cite[Lemma 1.2]{BMPW16} 
	\textnormal{ \& } \cite[Theorem 1]{mZ06}\textup{\big)}
	If the elements $a,b$ and $a\cdot b$ are pairwise independent over 
	$C$, then each one is generic in an
	$\acleq(C)$-type-definable coset of the same connected type-definable subgroup 
	\[\Stab(a/C)=\Stab(b/C)=\Stab(a\cdot b/C).\]
\end{fact}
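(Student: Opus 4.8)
The plan is to reduce everything to the stabilizer $S = \Stab(a/C)$, which in a stable theory is the connected type-definable subgroup whose generic type is the unique $C$-invariant extension supported by the coset of $a$. First I would work over $C$, which we may name as parameters, so assume $C = \emptyset$. Recall that for a type-definable group, $\Stab(a)$ is the intersection of the stabilizers of the definable pieces, and since $a$ has some fixed Lascar rank, $\U(S) = \U(\tp(a/\Cb(\text{generic of }aS)))$ matches $\U(a)$ minus $\U(aS$'s code$)$; the key point to extract is that $a$ is a generic element of the coset $aS$ and that $aS$ is $\acleq(\emptyset)$-type-definable, because $\ulcorner aS \urcorner \in \acleq(\emptyset)$ will follow from $a \ind S$-translation invariance once we know the rank drop. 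So the real content is the \emph{symmetry} claim: $\Stab(a) = \Stab(b) = \Stab(a\cdot b)$.

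To prove the symmetry, the plan is to use the characterization ($\clubsuit$) together with the pairwise independence hypothesis. From $a \ind b$ and the remark after ($\clubsuit$), $a \cdot b$ is generic over $b$ inside its coset; dually $a^{-1}$ has the same stabilizer as $a$ (the map $x \mapsto x^{-1}$ sends $S$-cosets to $S$-cosets and preserves genericity), so from $(a\cdot b)^{-1} = b^{-1} a^{-1}$ and the independence $a\cdot b \ind b$ one reads off a symmetric statement. Concretely I would show $\Stab(a) \subseteq \Stab(a\cdot b)$: if $g \models $ the generic of $\Stab(a)$ with $g \ind a,b,a\cdot b$, then $g\cdot a$ realizes $\tp(a)$ and $g \cdot a \ind b$ (by transitivity of the independences), hence $g\cdot a \cdot b$ realizes $\tp(a\cdot b)$ over $b$, giving $g \in \Stab(a\cdot b)$; running the same argument with the roles of $a$ and $b$ exchanged (using $b \ind a$ and right-translation) and with the inversion trick gives all the inclusions, hence equality of the three connected stabilizers. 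Finally, once $S := \Stab(a) = \Stab(b) = \Stab(a\cdot b)$ is established and is connected, each of $a, b, a\cdot b$ lies generically in its own left coset $aS$, $bS$, $(a\cdot b)S$, and each of these cosets is $\acleq(\emptyset)$-type-definable because its canonical parameter is interalgebraic with the canonical base of the generic type over $\emptyset$, which is algebraic over $\emptyset$ by stationarity of $\tp(a)$ (all conjugates of the coset carry the same rank-maximal type).

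The main obstacle I anticipate is bookkeeping the web of independence relations cleanly enough to chase the translations: one needs, for a suitably generic $g$, simultaneous independence of $g$ from $a$, from $b$, from $a\cdot b$, \emph{and} the resulting facts like $g\cdot a \ind b$, which require transitivity and the pairwise (not mutual) independence hypothesis — so some care is needed to verify that $g \ind a,b$ can be arranged while keeping $g \models$ generic of $\Stab(a)$ over $a$. A second, more conceptual point is justifying that the connected stabilizer is the \emph{same} subgroup and not merely conjugate or commensurable; this is where connectedness of $\Stab$ and uniqueness of the generic type of a stationary type are used, and it is cleanest to cite the standard theory of stabilizers in stable groups rather than to reprove it. With those two points handled, the rest is the translation chase sketched above.
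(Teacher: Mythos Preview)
The paper does not give a proof of Fact~\ref{F:ZieglersLemma}; it is stated with citations to \cite[Lemma~1.2]{BMPW16} and \cite[Theorem~1]{mZ06} and then used as a black box, so there is no in-paper argument to compare your sketch against.

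That said, your outline has a real gap. The translation chase you describe does establish $\Stab(a)\subseteq\Stab(a\cdot b)$, and rewriting $a=(a\cdot b)\cdot b^{-1}$ gives the reverse inclusion by the same mechanism; with some care about left versus right stabilizers (your ``inversion trick'' conflates $\Stab^L(a^{-1})$ with $\Stab^L(a)$, whereas in fact $\Stab^L(a^{-1})=(\Stab^R(a))^{-1}$) one can indeed push this through to get all three stabilizers equal. The substantive problem is the other half of the statement. You write that the cosets are $\acleq(\emptyset)$-type-definable ``by stationarity of $\tp(a)$'', but stationarity alone is insufficient: for an arbitrary stationary type $p=\stp(a)$ in a stable group one has only $\U(\Stab(p))\le\U(p)$, and when the inequality is strict the coset $a\cdot\Stab(p)$ carries $a$ as a non-generic point and is certainly not coded in $\acleq(\emptyset)$ (think of a non-generic type with trivial stabiliser). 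The whole content of Ziegler's lemma is that the \emph{pairwise independence} of $a,b,a\cdot b$ forces $\U(\Stab(a))=\U(a)$, so that $\stp(a)$ is the generic type of its coset, which is then automatically $\acleq(\emptyset)$-type-definable. One route: take $a'\models\stp(a)$ with $a'\ind a,b$; stationarity gives $(a',b)\equiv(a,b)$ and hence $a'b\models\stp(ab)$; now $a'\cdot a^{-1}=(a'b)(ab)^{-1}$, and computing ranks via both expressions using the independence hypotheses shows that $a'\cdot a^{-1}$ is independent from $a$ while $a'\cdot a^{-1}\cdot a=a'\models\stp(a)$, exhibiting an element of $\Stab(a)$ of rank $\U(a)$. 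Your plan needs this step (or an equivalent rank computation) inserted before you can claim genericity in, or $\acleq(\emptyset)$-definability of, the coset.
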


Motivated by Ziegler's lemma, we will introduce the following definition.

\begin{definition}\label{D:CF}
The 
	strong type $\stp(a/C)$ of an element $a$ of $G$ is \emph{coset-free} 
	if there is no proper
	type-definable connected 
	subgroup $H$ of the connected component $G^0$ such that $a$ is 
	contained in an
	$\acleq(C)$-type-definable 
	coset of $H$.
\end{definition}

Note that every generic type is coset-free. The following example of an 
additive
coset-free type in the field of complex numbers will
be used later.

\begin{example}\label{E:CosetFree}~
For a transcendental element $\alpha$ in the field $\mathbb C$ consider 
the type 
\[
p(x)=\stp(\alpha,\alpha^2,\ldots,\alpha^n).
\]
Every
$\mathbb{Q}^{\textnormal{alg}}$-definable coset of a definable additive 
subgroup is 
a linear variety over 
$\mathbb{Q}^{\textnormal{alg}}$, i.e. an 
intersections of subsets defined by
\[
\varepsilon_1 x_1 + \ldots \varepsilon_n x_n = \varepsilon_{n+1}
\]
for some algebraic elements $\varepsilon_i$.
Since $\alpha$ is transcendental, we deduce that the type $p(x)$ is 
coset-free in the additive group of the field of complex numbers.

\end{example}

The two following lemmata will be helpful for the study of type-definable 
groups in theories preserving internality on quotients.

\begin{lemma}\label{L:Lemma1}
	The product of $\U(G)$-many independent realizations of any coset-free 
	type $\stp(a/C)$ is generic over $C$. 
\end{lemma}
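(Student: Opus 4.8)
The plan is to argue by induction on $\U(G)$, building up generic elements one independent realization at a time and using Ziegler's lemma (Fact \ref{F:ZieglersLemma}) together with coset-freeness at each step. Write $n=\U(G)$ and let $a_1,\dots,a_n$ be independent realizations of $\stp(a/C)$ over $C$; I want to show $b:=a_1\cdots a_n$ is generic over $C$, i.e.\ $\U(b/C)=n$. Since each $a_i$ has the same Lascar rank $r:=\U(a/C)\le n$, and $\U(a_1\cdots a_k/C)$ is nondecreasing in $k$ (by the inequality $(\clubsuit)$ and its surrounding discussion), the rank increases along the chain until it stabilizes; the whole point is to show it cannot stabilize below $n$.

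First I would set up the key step: suppose $c:=a_1\cdots a_k$ satisfies $c\ind_C a_{k+1}$ but $c\cdot a_{k+1}\notind_C a_{k+1}$; then by $(\clubsuit)$ we have $\U(c\cdot a_{k+1}/C)=\U(c\cdot a_{k+1}/C,a_{k+1})<\U(c/C)+\U(a_{k+1}/C)$, so rank is ``lost''. The cleaner route is to show instead that whenever the rank fails to be maximal, $\stp(a/C)$ is forced into a coset of a proper connected subgroup, contradicting coset-freeness. Concretely, consider the largest $k$ for which $\U(a_1\cdots a_k/C)$ still equals $kr$ is too naive since $r$ may be $<n$; instead track the actual value $d_k:=\U(a_1\cdots a_k/C)$ and the stabilizer $\Stab(a_1\cdots a_k/C)$. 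Take realizations so that $c=a_1\cdots a_k$, $a_{k+1}$, and $c\cdot a_{k+1}$ are pairwise independent over $C$ (achievable by choosing the $a_i$ generically and using transitivity of independence); then Fact \ref{F:ZieglersLemma} gives a connected type-definable subgroup $H$ with $\Stab(c/C)=\Stab(a_{k+1}/C)=\Stab(c\cdot a_{k+1}/C)=H$, and $a_{k+1}$ lies in an $\acleq(C)$-type-definable coset of $H$. By coset-freeness of $\stp(a/C)=\stp(a_{k+1}/C)$, this forces $H=G^0$, hence $\U(c/C)=\U(G^0)=\U(G)=n$, so $c$ — and therefore also $b=c\cdot a_{k+1}\cdots a_n$, whose rank is at least $\U(c/C)=n$ and at most $n$ — is generic.

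The main obstacle is making the ``pairwise independence'' bookkeeping work: I need to arrange, for a suitable index $k<n$, that $a_1\cdots a_k$, $a_{k+1}$, and $a_1\cdots a_{k+1}$ are pairwise independent over $C$, and that this $k$ exists with $\U(a_1\cdots a_k/C)<n$ — precisely the situation where the rank has not yet reached its maximum. The first two independences are immediate from genericity of the $a_i$ and the fact that a product of some of them is in $\dcl$ of that subtuple; the third, $a_1\cdots a_k \ind_C a_1\cdots a_{k+1}$, is the delicate one and is where one uses $(\clubsuit)$ in the contrapositive direction: if it fails, rank is strictly lost at this step, and we pick $k$ to be the \emph{first} index where $\U$ strictly increases by less than $r$ — or, more robustly, run the argument at the last step before the rank would need to exceed $n$, so that either the rank is already $n$ (done) or Ziegler applies and forces $H=G^0$, again giving rank $n$. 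A small additional point to check is that passing from $G$ to $G^0$ costs nothing, since $\U(G)=\U(G^0)$ and a coset-free type over $C$ concentrates on a single coset of $G^0$; and that an element generic in $G^0$ is generic in $G$. With these in place the induction closes.
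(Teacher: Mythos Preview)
Your proposal has a genuine gap in the ``pairwise independence bookkeeping.'' You want to apply Ziegler's lemma to the triple $(c,\,a_{k+1},\,c\cdot a_{k+1})$ with $c=a_1\cdots a_k$. Of the three required independences, only $c\ind_C a_{k+1}$ is immediate. Contrary to what you assert, $a_{k+1}\ind_C c\cdot a_{k+1}$ is \emph{not} immediate --- the $a_i$ are not generic in $G$, only independent realizations of $\stp(a/C)$ --- and by $(\clubsuit)$ this independence is equivalent to $\U(c\cdot a_{k+1}/C)=\U(c/C)$, i.e.\ the rank stabilizes at step $k$. The remaining independence $c\ind_C c\cdot a_{k+1}$ is (by the same computation with the roles swapped) equivalent to $\U(c\cdot a_{k+1}/C)=\U(a_{k+1}/C)=r$. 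For both to hold simultaneously you need $\U(c/C)=r$, i.e.\ the rank never increased at all; and then Ziegler plus coset-freeness would give $\U(H)=\U(a_{k+1}/C)=r$ together with $H=G^0$, forcing $r=n$. So your argument only goes through when $\stp(a/C)$ is already the generic type.

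The paper avoids this by applying Ziegler not to $(c,\,a_{k+1},\,c\cdot a_{k+1})$ but to a triple whose members all have the \emph{same} stabilized rank. First, a pigeonhole plus an indiscernibility shift shows that once $\U(b_m/C)=\U(b_{m+1}/C)$ for some $m\le n$, the sequence $\U(b_j/C)$ is constant for all $j\ge m$; in particular $\U(b_n/C)$ is maximal. One then applies Ziegler to $(b_n,\,a_{n+1}\cdots a_{2n},\,b_{2n})$, where all three elements have rank $\U(b_n/C)$ and the three pairwise independences genuinely hold. This yields $H=\Stab(b_n/C)$ with $b_nH$ type-definable over $\acleq(C)$. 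The last step --- also missing from your outline --- is that Ziegler does \emph{not} directly place a realization of $\stp(a/C)$ in an $\acleq(C)$-coset of $H$; instead one writes $a_nH=a_{n-1}^{-1}\cdots a_1^{-1}\cdot b_nH$, observes that $\ulcorner a_nH\urcorner$ is therefore definable over $a_1,\ldots,a_{n-1},\acleq(C)$, and uses $a_n\ind_C a_1,\ldots,a_{n-1}$ to conclude $\ulcorner a_nH\urcorner\in\acleq(C)$. Only then does coset-freeness force $H=G^0$ and hence $\U(b_n/C)=n$.
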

\begin{proof}
Let $(a_i)_{i\geq 1}$ be a Morley sequence of $\stp(a/C)$.
For $m\geq 1$, set $b_m = a_1\cdots a_m$. 
We will show that for $n=\U(G)$ the element $b_n$ is generic. 
Since $b_{m+1}=b_m\cdot a_{m+1}$,  we have the increasing chain 
\[ \U(b_1/C)\leq\U(b_2/C)\leq\cdots\leq\U (b_{n}/C)\leq n,\] as remarked 
after the equivalence $(\clubsuit)$, which we will implicitly use 
constantly in the following. Hence, there is an index $m\le n$ 
such that $\U(b_m/C)=\U(b_{m+1}/C)$. 
Therefore (by the equivalence $(\clubsuit)$), \[b_{m+1} \ind_C a_{m+1},\]
so
\[ a_2 \cdot a_3 \cdot \ldots \cdot a_{m+2} \ind_C a_{m+2},\]  by 
indiscernability of the Morley sequence. 
As 
\[ a_1 \ind_C  a_{m+2},a_2 \cdot a_3 \cdot \ldots \cdot a_{m+2}, \]
we deduce that
\[ b_{m+2} \ind_C a_{m+2},\] 
and hence, \[\U (b_{m+2}/C)=\U(b_{m+1}/C)=\U (b_{m}/C).\]
Iterating this process, it follows  that $\U(b_n/C)$ is maximal among the 
ranks $\U(b_j/C)$, which yields the following independences: 
\[b_n \ind_C a_{n+1} \cdot \ldots \cdot a_{2n} , \qquad
b_n \ind_C b_{2n} \quad \textnormal{ and } \quad
a_{n+1} \cdot \ldots \cdot a_{2n} \ind_C b_{2n}.\]
Fact \ref{F:ZieglersLemma} implies
that the coset $b_n \Stab(b_n/C)$ is type-definable over $\acleq(C)$. 

In order to conclude that $b_n$ is generic in $G$, we need only show that 
the connected type-definable subgroup
$H=\Stab(b_n/C)$ equals the connected component $G^0$.  Therefor we use 
that the type  $\stp(a/C)$ is 
coset-free.  Note that $a_n H= a_{n-1}^{-1}\ldots a_{1}^{-1}b_n H$. Since 
the sequence 
$(a_i)_{i\geq 1}$ is independent over $\acleq(C)$, we obtain the 
independence \[\ulcorner a_n 
H\urcorner\ind_C \ulcorner a_n 
H\urcorner,\] so the coset $a_n 
H$ is $\acleq(C)$-type-definable. We conclude that $H=G^0$, since 
$\stp(a/C)$ is coset-free. 
\end{proof}

\begin{lemma}\label{L:Lemma2}
	Suppose that the type $\stp(a/C)$ is coset-free and $g$ is generic 
	over $C,a$. The intersection $\acleq(g,C)\cap\acleq(g\cdot 
	a^{-1},C)=\acleq(C)$.
\end{lemma}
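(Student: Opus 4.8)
The plan is to prove that $B:=\acleq(g,C)\cap\acleq(g\cdot a^{-1},C)$ equals $\acleq(C)$, the inclusion $\supseteq$ being trivial. Write $h:=g\cdot a^{-1}$, so that $g=h\cdot a$; since $g$ and $h$ are interdefinable over $C,a$, the element $h$ is also generic over $C,a$, hence both $g$ and $h$ are generic over $C$ and $g\ind_C a$, $h\ind_C a$. If $a$ happens to be generic over $C$, then $\U(g/C,h)=\U(a/C,h)=\U(a/C)=\U(G)$, so $g\ind_C h$, and then $B=\acleq(C)$ because in a stable theory independence over $C$ forces the intersection of algebraic closures to be $\acleq(C)$. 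So assume from now on that $a$ is not generic over $C$.

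Since $B\subseteq\acleq(g,C)$ and $a\ind_C g$, we have $\acleq(B,g)=\acleq(C,g)$, whence $\U(a/B,g)=\U(a/C,g)=\U(a/C)\ge\U(a/B)\ge\U(a/B,g)$, so all these ranks coincide: $a\ind_B g$ and $\U(a/B)=\U(a/C)$; symmetrically $a\ind_B h$. In particular $\stp(a/B)$ is the non-forking extension of $\stp(a/C)$, so $\Stab(\stp(a/B))$ equals $S:=\Stab(\stp(a/C))$, an $\acleq(C)$-type-definable subgroup whose connected component $S^0$ is proper in $G^0$ (as $\U(S^0)\le\U(a/C)<\U(G)$). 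Granting the independence $g\ind_B h$, the elements $a,g,h$ — with $g=h\cdot a$ — are pairwise independent over $B$, so Ziegler's Lemma (Fact~\ref{F:ZieglersLemma}) applies over $B$: $a$ is generic in a $B$-type-definable coset of $S^0$. Finally, the canonical parameter $\ulcorner a\cdot S^0\urcorner$ lies in $B\cap\acleq(C,a)\subseteq\acleq(g,C)\cap\acleq(C,a)=\acleq(C)$, again using $a\ind_C g$. Thus $a$ is generic in a proper $\acleq(C)$-type-definable coset of $S^0$, contradicting that $\stp(a/C)$ is coset-free, and therefore $B=\acleq(C)$.

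The one missing ingredient, and the step I expect to be the crux, is the independence $g\ind_B h$; equivalently, one must show that $\Cb(\stp(g/C,h))$ --- which always lies in $\acleq(h,C)=\acleq(g\cdot a^{-1},C)$ --- in fact also lies in $\acleq(g,C)$, so that it belongs to $B$. This is precisely where the coset-freeness of $\stp(a/C)$ must be used decisively, and not merely through $S$ being $\acleq(C)$-type-definable: since $\stp(g/C,h)$ is the translate by $h$ of the non-forking extension of $\stp(a/C)$, its canonical base is, up to interalgebraicity over $C$, controlled by the coset $h\cdot S^0$ together with the ``coset-free part'' of $\stp(a/C)$, and a short rank count combined with coset-freeness should push it into $\acleq(g,C)$. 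A fall-back, if this direct route is awkward, is to enlarge $B$ by $\Cb(\stp(g/C,h))$ and run a descending induction on $\U(g/B)$, applying the argument of the previous paragraph once $\U(g/B)$ has dropped to $\U(a/C)$; either way, coset-freeness of $\stp(a/C)$ together with $a\ind_C g$ is what makes the final descent from $B$ to $\acleq(C)$ go through.
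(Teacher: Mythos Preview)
Your proposed strategy has a fatal gap: the independence $g\ind_B h$ that you single out as ``the crux'' is in fact \emph{false} in the case you care about, so it cannot be the missing ingredient --- it is simply not there to be found.

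Here is why. Since the lemma is true, we have $B=\acleq(C)$, and therefore $g\ind_B h$ is equivalent to $g\ind_C h$. Now compute: $h$ is generic over $C,a$, hence $a\ind_C h$, and $g=h\cdot a$, so
\[
\U(g/C,h)=\U(a/C,h)=\U(a/C).
\]
Thus $g\ind_C h$ holds if and only if $\U(a/C)=\U(G)$, i.e.\ exactly when $a$ is generic over $C$ --- the case you already disposed of. In your Case~2 one always has $g\nind_B h$, and Ziegler's Lemma cannot be applied to the triple $(h,a,g)$ over $B$. For a concrete instance, take $G=(\mathbb C^2,+)$ and $a=(\alpha,\alpha^2)$ as in Example~\ref{E:CosetFree}: then $\U(g/B)=2$ but $\U(g/B,h)=\U(a/C)=1$. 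This also kills your first fall-back: you argued that $\Cb(\stp(g/C,h))\subseteq\acleq(g,C)$ would yield $g\ind_B h$, and the computation above shows that this containment therefore fails whenever $a$ is not generic. Your second fall-back (enlarging $B$ by this canonical base) leaves the symmetric situation $B\subseteq\acleq(g,C)\cap\acleq(h,C)$, which is what made the final descent $\ulcorner aS^0\urcorner\in\acleq(C)$ work, so the induction does not get off the ground either.

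The paper's proof avoids this obstruction entirely by a different mechanism: it takes a Morley sequence $(a_i)_{i\ge1}$ of $\stp(a/C)$ independent from $g$ and builds $g_{i+1}=g_i\cdot a_{2i-1}^{-1}\cdot a_{2i}$. A short stationarity argument shows that the intersection $\acleq(g_i,C)\cap\acleq(g_i\cdot a_{2i-1}^{-1},C)$ is constant in $i$, while Lemma~\ref{L:Lemma1} (applied to the coset-free type $\stp(a_1^{-1}a_2/C)$) guarantees that after $n=\U(G)$ steps the element $g_n$ is generic and independent from $g$ over $C$; at that point the intersection is visibly $\acleq(C)$. Coset-freeness enters only through Lemma~\ref{L:Lemma1}, not through any pairwise-independence configuration over $B$.
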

\begin{proof}
	For simplicity of notation, we assume that $C=\emptyset$.
	Let $(a_i)_{i\geq 1}$ be a Morley sequence of $\stp(a)$ independent 
	from $g$ with $a_1=a$.
	 Set now \[ g_1=g \text{ and } 
	g_{i+1}=g_i\cdot a_{2i-1}^{-1}\cdot a_{2i} \text{ for 
	} i\geq 1.\] 	
			By stationarity, the independence 
		\[a_{2i},a_{2i-1} \ind 
		g_i \cdot a_{2i-1}^{-1}\] 
		implies that
		\[
		a_{2i} \stackrel{\text{st}}{\equiv}_{g_i \cdot 
		a_{2i-1}^{-1}} 
		a_{2i-1},
		\]
		i.e. the elements $a_{2i}$ and $a_{2i-1}$ have the same strong 
		type over $g_i 
		\cdot a_{2i-1}^{-1}$.
	  Hence
		\[
		g_{i+1} \stackrel{\text{st}}{\equiv}_{g_i \cdot a_{2i-1}^{-1}} 
		g_{i},\] by construction, so\[
		\acleq(g_{i})\cap \acleq(g_i \cdot 
		a_{2i-1}^{-1})=\acleq(g_{i+1})\cap \acleq(g_i \cdot a_{2i-1}^{-1}).
		\]
		Again by stationarity $a_{2i}^{-1} 
		\stackrel{\text{st}}{\equiv}_{g_{i+1}} 
		a_{2i+1}^{-1}$ and thus 
		\[g_{i} \cdot a_{2i-1}^{-1} = g_{i+1} \cdot 
		a_{2i}^{-1} \stackrel{\text{st}}{\equiv}_{g_{i+1}} 
		g_{i+1} \cdot a_{2i+1}^{-1}.\]
		We deduce that the intersection
		\[
		\acleq(g_{i+1})\cap \acleq(g_i \cdot a_{2i-1}^{-1})=\acleq(g_{i+1})\cap \acleq(g_{i+1} \cdot a_{2i+1}^{-1}).
		\]
		Thus,
		\[
		\acleq(g_{i})\cap \acleq(g_i \cdot a_{2i-1}^{-1})=	\acleq(g_{i+1})\cap \acleq(g_{i+1} \cdot a_{2(i+1)-1}^{-1})
		\]
		for all $i\geq 1$. Hence, we obtain that
		\[
		\acleq(g)\cap\acleq(g\cdot 
		a^{-1})=\ldots=\acleq(g_n)\cap\acleq(g_{n}\cdot 
		a_{2n-1}^{-1})\subseteq\acleq(g)\cap\acleq(g_n).
		\]
		
		We need only show that the type 
		$\stp(a_1^{-1}\cdot a_{2})$ is also coset-free: Indeed, in that 
		case, the Lemma \ref{L:Lemma1} implies that the 
		element 
		\[b=(a_{1}^{-1} \cdot a_{2})\cdots
		(a_{2i-1}^{-1} \cdot a_{2i})\cdots
		(a_{2n-1}^{-1} \cdot a_{2n})
		\]
		is generic over $g$, where $n=\U(G)$. Thus, the product 
		$g_n=g\cdot b$ is generic and independent from $g$, so  the 
		intersection
		$\acleq(g)\cap\acleq(g_n)
		=\acleq(\emptyset)$, as desired. 
		 
		Suppose  that the element $a_1^{-1} \cdot a_{2}$ is 
		contained 
		in an 
		$\acleq(\emptyset)$-type-definable 
		coset of a connected subgroup $H$. The 
		independence 
		\[a_1, \ulcorner a_{1}^{-1}a_{2}U\urcorner \ind a_2\]
		implies that \[\ulcorner a_2 H\urcorner \ind \ulcorner a_2 
		H\urcorner,\] so
		$\ulcorner a_2 H\urcorner$ is $\acleq(\emptyset)$-definable. Since 
		$\stp(a)$ is coset-free, we deduce that $H=G^0$, as desired. 	
\end{proof}

We now have all the ingredients in order to show the following 
property of certain type-definable groups.

\begin{theorem}\label{T:CPIQ}
Suppose that the ambient theory $T$ transfers $\Sigma$-internality on 
quotients 
and consider a type-definable subgroup
$H$ of $G$ possibly over parameters. If the type $\stp(a)$ of the element 
$a$ of $G$ is coset-free and 
$\stp(\ulcorner aH \urcorner)$ is almost $\Sigma$-internal, then 
$\stp(\ulcorner gH \urcorner )$ is almost $\Sigma$-internal, 
whenever $g$ is generic in $G$ over $a$. 
\end{theorem}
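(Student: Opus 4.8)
The plan is to apply transfer of $\Sigma$-internality on quotients to the imaginary $b=\ulcorner gH\urcorner$ with the two base sets $A_1=g$ and $A_2=g\cdot a^{-1}$, and then to evaluate the resulting intersection $\acleq(g)\cap\acleq(g\cdot a^{-1})$ by means of Lemma~\ref{L:Lemma2}. The key observation is that the decomposition $gH=(g\cdot a^{-1})\cdot(aH)$ exhibits the coset $gH$ as controlled simultaneously over $g$ and over $g\cdot a^{-1}$, and that Lemma~\ref{L:Lemma2} is tailored precisely to this pair $\{g,\,g\cdot a^{-1}\}$.

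First I would record the relevant definable closures of the coset codes. The set $aH$ recovers $H$, since $H=(aH)^{-1}(aH)$, and moreover $gH=g\cdot H=(g\cdot a^{-1})\cdot(aH)$; by uniformity of the coding of cosets (type-definable groups being intersections of definable ones) this yields
\[
\ulcorner gH\urcorner\in\dcleq(g,\ulcorner aH\urcorner)\cap\dcleq(g\cdot a^{-1},\ulcorner aH\urcorner).
\]
In particular the parameters over which $H$ is defined cause no trouble, being already encoded in $\ulcorner aH\urcorner$. Since almost $\Sigma$-internality is preserved under adding parameters to the base and under passing to an element of the definable closure of the base together with an almost $\Sigma$-internal element, and $\stp(\ulcorner aH\urcorner)$ is almost $\Sigma$-internal by hypothesis, it follows that $\stp(\ulcorner aH\urcorner/g)$ and $\stp(\ulcorner aH\urcorner/g\cdot a^{-1})$ are almost $\Sigma$-internal, and hence, by the displayed memberships, so are both $\stp(b/g)$ and $\stp(b/g\cdot a^{-1})$.

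Next I would invoke the hypothesis on $T$: transfer of $\Sigma$-internality on quotients, applied to the imaginary $b$ with $A_1=g$ and $A_2=g\cdot a^{-1}$, shows that
\[
\stp\bigl(b\,/\,\acleq(g)\cap\acleq(g\cdot a^{-1})\bigr)
\]
is almost $\Sigma$-internal. It then remains only to identify this intersection, and this is exactly what Lemma~\ref{L:Lemma2} provides: as $\stp(a)$ is coset-free and $g$ is generic over $a$, its instance $C=\emptyset$ gives $\acleq(g)\cap\acleq(g\cdot a^{-1})=\acleq(\emptyset)$. Therefore $\stp(b)=\stp(\ulcorner gH\urcorner)$ is almost $\Sigma$-internal, as claimed.

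I do not expect a genuine obstacle: the substantive work is packaged in Lemmas~\ref{L:Lemma1} and~\ref{L:Lemma2}, and this theorem is essentially the observation that Lemma~\ref{L:Lemma2} turns coset-freeness of $\stp(a)$ into precisely the configuration to which transfer on quotients applies. The only points that need care are the standard preservation properties of almost $\Sigma$-internality (base extension and definable closure) and the verification of the two definable-closure memberships displayed above, the latter being a routine uniformity matter for cosets in a stable group.
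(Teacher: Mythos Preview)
Your proposal is correct and follows essentially the same approach as the paper's proof: both use the identities $H=(aH)^{-1}(aH)$ and $gH=(g\cdot a^{-1})\cdot(aH)$ to show that $\stp(\ulcorner gH\urcorner/g)$ and $\stp(\ulcorner gH\urcorner/g\cdot a^{-1})$ are almost $\Sigma$-internal, then apply transfer on quotients and conclude via Lemma~\ref{L:Lemma2}. The only cosmetic difference is that the paper first extracts $\ulcorner H\urcorner$ from $\ulcorner aH\urcorner$ and uses $\ulcorner gH\urcorner\in\dcleq(g,\ulcorner H\urcorner)$, whereas you go directly through $\ulcorner gH\urcorner\in\dcleq(g,\ulcorner aH\urcorner)$.
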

Notice that we do not require that the type $\stp(a/\ulcorner H 
\urcorner)$ is coset-free. Indeed, if $\stp(a/\ulcorner H 
\urcorner)$ is coset-free, the same conclusion holds 
without imposing that the theory $T$ transfers 
$\Sigma$-internality on 
quotients: A finite product  of independent 
realizations of the type 
$\stp(a/\ulcorner H 
\urcorner)$ is 
generic in $G$, by the Lemma \ref{L:Lemma1}. 
\begin{proof}
	Let $g$ be generic in $G$ over $a$.
	The type $\stp(\ulcorner H \urcorner)$ is almost 
	$\Sigma$-internal, 
	because 
	$H=(aH)^{-1}aH$. Since the coset $gH$ is definable over $g,\ulcorner 
	H\urcorner$, the type \[\stp(\ulcorner gH 
	\urcorner/g)\]is almost $\Sigma$-internal.
	On the other hand, the identity $gH=(g\cdot a^{-1})aH$ yields that the 
	type 
	\[\stp(\ulcorner gH\urcorner/g\cdot a^{-1})\]
	is almost $\Sigma$-internal.  
	Preservation of $\Sigma$-internality on 
	quotients implies that 
	\[
	\stp(\ulcorner gH\urcorner/\acleq(g)\cap\acleq(g\cdot a^{-1}))
	\]
	is almost $\Sigma$-internal. Since the type $\stp(a)$ is coset-free 
	and $g$ is generic over $a$, the intersection
	\[
	\acleq(g)\cap\acleq(g\cdot a^{-1})=\acleq(\emptyset),
	\]
	by the Lemma \ref{L:Lemma2}, so the type $\stp(\ulcorner gH\urcorner)$ 
	is 
	almost $\Sigma$-internal, as desired.
\end{proof}
For a connected type-definable group $G$ in a theory with the CBP relative 
to the family $\Sigma$, Kowalski and 
Pillay \cite[Theorem 4.3]{KP06} showed that the quotient $G/Z(G)$ is 
almost $\Sigma$-internal. We do not know whether the same holds under the 
weaker assumption of transfer of $\Sigma$-internality on quotients, 
without assuming the existence of a suitable coset-free type $\stp(a)$, as 
in the previous theorem.
	
\section{A cover of a group}\label{S:Cover}

In this section, we will apply the previous results to covers of 
groups, which will then be needed in Section \ref{S:NoCBP} in order to 
produce new structures without the CBP. 

\begin{definition}\label{D:Cover}
	Suppose that $(G,\cdot)$ is a stable group of finite Lascar rank 
	(possibly 
	with additional structure). 
	A \emph{cover} of $G$ is a structure 
	$\mathcal{M}=(S,P,\pi,\star,\odot,\ldots)$ with two distinguished 
	sorts $S$ and $P$ such that the 
	following conditions hold:
	\begin{itemize}
		
			\item The sort $P$ equals $G$ and the induced structure on $P$ 
			coincides with the full structure of $G$.

		\item 	The map $\pi:(S,\odot)\rightarrow (P,\cdot)$ is 
	a surjective group homomorphism.
			\item The group action $\star$ of $P$ on $S$ turns each fiber 
	of $\pi$ into a principal homogeneous space.
	\end{itemize}
The cover is \emph{non-degenerated}, if the sort $S$ is not almost $P$-internal.
\end{definition}

Note in particular that $P$ is stably embedded in the cover $\mathcal M$, 
that is, every definable subset of $P^m$ is definable with parameters from $P$.
Hence, every type of a tuple in the cover over $P$ is definable over a 
subset of $P$. Furthermore, the cover $\mathcal M$ is again stable of 
finite rank, since each fiber is definably isomorphic to the sort $P$.

The example of an uncountably categorical theory without the CBP given by 
Hrushovski, Palac\'in and Pillay \cite{HPP13} may be seen as an (additive) 
cover of the complex numbers.

\begin{example}~\label{E:Cover}
	Consider the (additive) non-degenerated cover 
	$\mathcal{M}_1=(S,P,\pi,\star,\oplus,\otimes)$, where $P$ is the field 
	$\mathbb C$ of complex numbers and $S=\mathbb C \times 
	\mathbb C$. The projection $\pi:S\rightarrow P$ maps an element of $S$ 
	onto the first coordinate (seen as a pair of complex numbers) and 	
	the group action $\star$ of $P$ on $S$ is given by $\beta \star 
	(\alpha,a')=(\alpha, a' + \beta)$. 
	
	Moreover, the cover $\mathcal M_1$ has two distinguished 
	operations:   	
	\[	\begin{array}{rccc}
		\oplus: & S\times S & \rightarrow & S \\[2mm]
		& \big((\alpha,a'),(\beta,b')\big)&\mapsto& (\alpha+\beta, 
		a' + b');
		\end{array}\]
 and 
	\[	\begin{array}{rccc}
	\otimes: & S\times S & \rightarrow & S \\[2mm]
	& \big((\alpha,a'),(\beta,b')\big)&\mapsto& (\alpha\beta,\alpha 
	b'+\beta 
	a').
	\end{array}, \]
	which define a ring structure on $S$.
\end{example}

\bigskip
\textbf{Henceforth, we fix a non-degenerated cover 
$\boldsymbol{\mathcal{M}=(S,P,\pi,\star,\odot,\ldots)}$ of a 
group 
$\boldsymbol G$ of finite Lascar rank.}
\medskip

By $P$-internality or internality with respect to $P$ we mean internality with respect to the family $\Sigma$ of types in the sort $P$.

\begin{proposition}\label{P:CPIQG}
	Suppose that $\mathcal{M}$ transfers $P$-internality 
	on 
	quotients. 
	There exists no tuple $a=(a_1,\ldots,a_n)$ in 
	$S^n$ such that $\stp(\pi(a))$ is coset-free in $(P^n,\cdot)$ and 
	$a_n$ is 
	algebraic over $a_1,\ldots,a_{n-1},P$.
\end{proposition}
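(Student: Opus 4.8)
The strategy is to contradict non-degeneracy of $\mathcal{M}$ by forcing a generic element of $S$ to be almost $P$-internal, via Theorem~\ref{T:CPIQ} applied inside $(S^n,\odot)$.

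Suppose such a tuple $a=(a_1,\dots,a_n)$ exists, and among all such tuples, over all $n$, pick one with $n$ minimal. Work in $G:=(S^n,\odot)$ with the coordinatewise operation, put $N:=\ker\bigl(\pi\colon(S,\odot)\to(P,\cdot)\bigr)$, and note that $G/N^n$ is definably isomorphic to $P^n$ through $\pi$. The first step, which is one of the two main points, is to show that $\stp(a)$ is coset-free in $G$. If not, $a$ lies in an $\acleq(\emptyset)$-type-definable coset of a proper connected type-definable subgroup $K\le G^0$; applying $\pi$ and using that $\stp(\pi(a))$ is coset-free in $P^n$ forces $\pi^n(K)=(P^n)^0$, hence $K\cap N^n\subsetneq N^n$. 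Passing to $G/(K\cap N^n)$, in which the relevant quotient map of $\pi$ now splits, one either finds that $a$ has become almost $P$-internal — whence, by coset-freeness of $\stp(\pi(a))$ and Lemma~\ref{L:Lemma1}, the sort $S$ would be almost $P$-internal, against non-degeneracy — or one reads off a bad tuple of length $<n$, against minimality of $n$.

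Granting coset-freeness of $\stp(a)$, I would fix a small $C\subseteq P$ witnessing $a_n\in\acleq(a_1,\dots,a_{n-1},C)$ (using stable embeddedness of $P$, and that transfer of $P$-internality on quotients is preserved under naming parameters), and upgrade, via Fact~\ref{F:ZieglersLemma} and the group structure, the algebraic dependence of $a_n$ on $(a_1,\dots,a_{n-1})$ over $C$ to a dependence up to a connected type-definable subgroup, obtaining a homomorphism $\psi\colon S^{n-1}\to S$ defined over $\acleq(C)$ with $a_n\odot\psi(a_1,\dots,a_{n-1})^{-1}$ almost $P$-internal. Let $H\le G$ be the preimage of $\{e\}^{n-1}\times N$ under the homomorphism $G\to P^{n-1}\times S$, $(x_1,\dots,x_n)\mapsto\bigl(\pi(x_1),\dots,\pi(x_{n-1}),\,x_n\odot\psi(x_1,\dots,x_{n-1})^{-1}\bigr)$. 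Then $\ulcorner aH\urcorner$ is interdefinable with $\pi(a_1),\dots,\pi(a_{n-1})$ together with the almost $P$-internal element $a_n\odot\psi(a_1,\dots,a_{n-1})^{-1}$, so $\stp(\ulcorner aH\urcorner/C)$ is almost $P$-internal; whereas for $g$ generic in $G$ over $C,a$ the component $g_n\odot\psi(g_1,\dots,g_{n-1})^{-1}$ recorded by $\ulcorner gH\urcorner$ is a generic of $S$. Theorem~\ref{T:CPIQ} then yields that $\stp(\ulcorner gH\urcorner/C)$, hence the strong type of a generic of $S$, is almost $P$-internal; since a product of $\U(S)$-many independent generics of $S$ is generic by Lemma~\ref{L:Lemma1} and a product of almost $P$-internal elements is almost $P$-internal, $S$ itself would be almost $P$-internal, contradicting non-degeneracy. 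I expect the extraction of the homomorphism $\psi$ — turning the bare algebraicity of $a_n$ over $a_1,\dots,a_{n-1}$ and $P$ into a coset dependence — to be the main obstacle, the coset-freeness reduction in the first step being the second.
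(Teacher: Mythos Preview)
Your two self-identified obstacles are not merely technical: the extraction of the homomorphism $\psi$ is a genuine gap that cannot be filled. Consider the cover $\mathcal{M}_1$ with $n=2$, $a_2=a_1\otimes a_1$, so $C=\emptyset$. Any $\acleq(\emptyset)$-definable $\oplus$-homomorphism $\psi\colon S\to S$ has $\psi_2(0,y)=\lambda y$ for some algebraic scalar $\lambda$, since definable additive endomorphisms of $(\mathbb C,+)$ are scalar multiplications. For $a_2\ominus\psi(a_1)$ to be almost $P$-internal one needs every derivation-automorphism to (almost) fix it; chasing the second coordinate this forces $\psi_2(0,D(\alpha))=D(\alpha^2)=2\alpha D(\alpha)$, i.e.\ $\lambda=2\alpha$, impossible for generic $\alpha$. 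So no such $\psi$ exists, and Theorem~\ref{T:CPIQ} cannot be invoked via a genuine subgroup $H\le S^n$. Your coset-freeness reduction is also incomplete: the claim that the map ``splits'' in $G/(K\cap N^n)$ needs $K$ to normalise $N^n/(K\cap N^n)$ or something similar, and the ensuing dichotomy is not argued.

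The paper sidesteps both problems. Rather than forcing a subgroup of $(S^n,\odot)$, it works with the $\star$-orbit $H_a=\{\gamma\star a:\gamma\in H\}$ of $a$ under the subgroup $H\le P^n$ stabilising the $p$-definition of the algebraicity formula; the remark preceding the proof explicitly notes that $H_a$ need \emph{not} be a coset in $(S^n,\odot)$, because $\star$ and $\odot$ may not be compatible. The crucial move is then to apply Lemma~\ref{L:Lemma2} in $P^n$ rather than in $S^n$: one shows that $\stp(\ulcorner g\odot a^{-1}\odot H_a\urcorner)$ is almost $P$-internal both over $\pi(g\odot a^{-1})$ (since $\ulcorner H_a\urcorner$ is $P$-definable) and over $\pi(g)$ (since $g\odot a^{-1}\odot H_a$ sits entirely in the fiber above $\pi(g)$), and then transfer of internality on quotients together with $\acleq(\pi(g))\cap\acleq(\pi(g)\cdot\pi(a)^{-1})=\acleq(\emptyset)$ --- which follows from coset-freeness of $\stp(\pi(a))$ in $P^n$, the only hypothesis available --- gives almost $P$-internality over $\emptyset$. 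Finiteness of the last-coordinate slice of $H_a$ then makes $g_n$ algebraic over $g_1,\dots,g_{n-1},\ulcorner g\odot a^{-1}\odot H_a\urcorner$. Thus the paper mimics Theorem~\ref{T:CPIQ} by hand, descending the intersection computation to $P^n$, and never needs coset-freeness in $S^n$ nor the nonexistent homomorphism $\psi$.
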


The idea of the proof is to mimic Theorem \ref{T:CPIQ}, despite we do not 
explicitly have a type-definable subgroup of $S^n$, whose coset should be 
the set $H_a$ in the proof below, due to the possible lack of 
compatibility between the group action $\star$ and the group law $\odot$.

\begin{proof}
	Since $P$ is stably embedded, there is a subset $B$ of $P$ such that 
	the type $p(x)=\stp(a/P)$ is definable over $B$.
	Choose a formula $\varphi(x, b)$ in $p(x)$  witnessing that $a_n$ is 
	algebraic over $a_1,\ldots,a_{n-1},B$. Set
	\[H=\big\{ \gamma\in P^n \ |\ \forall y\in P^{|b|} \big( 
	\textnormal{d}_p 
	x\varphi(x,y) \leftrightarrow \textnormal{d}_p x\varphi(\gamma^{-1}\star 
	x,y) \big) \big\}\]
	and
	\[
	H_{a}=\{ x\in S^n \ |\ x=\gamma\star a \textnormal{ for some } \gamma 
	\textnormal{ in } H\}.
	\]
	Note that the canonical parameter of the definable set $\ulcorner H_a 
	\urcorner$ is definable over $P$: Given an automorphism $\sigma$ 
	fixing $P$ pointwise, we have that $a$ and $\sigma(a)$ lie in the same 
	fiber with respect to the projection $\pi$, so $\sigma(a)=\gamma\star 
	a$ for some element $\gamma$ in $P^n$.   By the definition of $H$, it 
	is immediate that $\gamma$ belongs to $H$, so     
 $\sigma(a)$ lies in $H_a$. It  now 
	follows that $\sigma$ permutes $H_a$,  since the group $H$ is fixed pointwise. 
	
	Choose now a generic element $g=(g_1,\dots,g_n)$ in $S^n$ 
	over $a$. The set 
	\[g\odot 
	a^{-1} \odot H_a =  \{ g\odot 
	a^{-1} \odot x \ | \ x\in H_a \}
	\]
	 is definable over $g\odot a^{-1}, 
	\ulcorner H_a \urcorner$. Since every fiber of $\pi$ is $P$-internal, 
	so is the type
	\[
	\stp\big(\ulcorner g\odot a^{-1} \odot H_a \urcorner / \pi(g\odot 
	a^{-1})\big).
	\]
	We are now led to show that the type
	\[
	\stp(\ulcorner g\odot a^{-1} \odot H_a \urcorner)
	\]
	is almost $P$-internal.  By transfer of $P$-internality on 
	quotients,  it suffices to show that the type
	\[
	\stp\big(\ulcorner g\odot a^{-1} \odot H_a \urcorner / \pi(g)\big)
	\]
	is $P$-internal, for the intersection \[
	\acleq(\pi(g))\cap\acleq(\pi(g)\cdot\pi(a)^{-1})=\acleq(\emptyset),
	\] by Lemma \ref{L:Lemma2}. 
	
	Notice that  \[ \pi(a^{-1}\odot x) = \pi(a)^{-1}\cdot \pi(x) = e_G,\] 
	where $e_G$ is the neutral element of $G$, for $x$ in $H_a$. Thus, the 
	set 
	$g \odot a^{-1} \odot H_a$ is contained in the fiber of $g$, which 
	immediately yields that 
	the type
	\[
	\stp\big(\ulcorner g\odot a^{-1} \odot H_a \urcorner / \pi(g)\big)
	\]
	is $P$-internal, as desired. 
	
	In order to conclude, we need only show that the type 
	$\stp(g_n/g_1,\ldots,g_{n-1})$ is almost $P$-internal, since the 
	generic element $g$ of $S^n$ consists of generic independent 
	coordinates. 	
	Note that the set
	\[
	Z=\{ z\in S \ |\ (a_1,\ldots,a_{n-1},z)\in H_a \}
	\]
	is finite and contains $a$, witnessed by the formula $\varphi(x,b)$, 
	and 
	thus so is the set 
	\[
	\{ u \in S \ |\ (g_1,\ldots,g_{n-1},u)\in g\odot a^{-1}\odot H_a \} 
	=g_n\odot a_{n}^{-1} \odot Z. 
	\] Hence, the element 	$g_n$ is algebraic over 
	$g_1,\ldots,g_{n-1},\ulcorner 
	g\odot a^{-1} \odot H_a \urcorner$, so the type 
	$\stp(g_n/g_1,\ldots,g_{n-1})$ is almost $P$-internal, as desired.
	\end{proof}

We conclude this section showing that non-degenerated covers
of a group of Lascar rank one cannot
eliminate finite imaginaries, if the cover transfers 
$P$-internality on quotients. 
In particular, every such cover with elimination of imaginaries is 
a 
counterexample to the CBP.

\begin{corollary}\label{C:FiniteImagCovers}
	If the non-degenerated cover $\mathcal{M}$ of a group of Lascar rank one transfers $P$-internality 
	on quotients, then $\mathcal{M}$ does not eliminate finite imaginaries.
\end{corollary}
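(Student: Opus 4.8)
The plan is to argue by contradiction: suppose $\mathcal M$ transfers $P$-internality on quotients and also eliminates finite imaginaries. Since $\mathcal M$ is a non-degenerated cover, Proposition \ref{P:CPIQG} then applies, so there is no tuple $a=(a_1,\ldots,a_n)$ in $S^n$ with $\stp(\pi(a))$ coset-free in $(P^n,\cdot)$ and $a_n\in\acl(a_1,\ldots,a_{n-1},P)$; I will derive a contradiction by exhibiting such a tuple, using elimination of finite imaginaries to turn an auxiliary finite set into a genuine tuple from the sort $S$.

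First I would extract the type-theoretic content of non-degeneracy. Pick $s$ generic in $S$; since $S$ is not almost $P$-internal, neither is $\stp(s)$, so in particular $s\notin\acleq(P)$. Set $\alpha:=\pi(s)$. The fiber $F:=\pi^{-1}(\alpha)$ is a principal homogeneous space for $P$, definable over $\alpha$, so $\stp(s/\alpha)$ is $P$-internal. As $\U(G)=1$, the element $\alpha$ must be generic in $G$ — otherwise $\alpha\in\acleq(\emptyset)$ and then, $\stp(s/\alpha)$ being $P$-internal, $\stp(s)$ would already be almost $P$-internal. Moreover $\U(s/P)=\U(s/\alpha)=1$ (using $s\notin\acleq(P)$), so $\stp(s/P)$ does not fork over $\alpha$, whence $s\ind_\alpha P$ and $\Cb(s/P)\subseteq\acleq(\alpha)$. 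Finally, $F$ has no point in $\acleq(\alpha)$: such a point would make $F$ $\acleq(\alpha)$-definably isomorphic to $P$, giving $s\in\acleq(\alpha,\beta)$ for some $\beta\in P$ and hence $\stp(s)$ almost $P$-internal, a contradiction. By homogeneity the same holds for the fiber over any generic point of $G$; this is the precise way in which the cover fails to be a trivial family of torsors.

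Next I would manufacture the forbidden tuple. Choosing generic independent $\alpha_1,\ldots,\alpha_{n-1}$ in $G$ and $s_i$ generic in $F_{\alpha_i}:=\pi^{-1}(\alpha_i)$, I would produce — using the structure of $\mathcal M$ relating distinct fibers together with the non-triviality just isolated — a finite set $D$ of elements of $S$, definable over $s_1,\ldots,s_{n-1}$ and finitely many parameters from $P$, with $\ulcorner D\urcorner\notin\acleq(P)$, and such that the $\pi$-image of a suitable enumeration of $D$ together with $(\alpha_1,\ldots,\alpha_{n-1})$ is coset-free in the corresponding power of $(P,\cdot)$. Elimination of finite imaginaries yields a tuple $c$ from the real sorts interdefinable with $\ulcorner D\urcorner$; since $\ulcorner D\urcorner\notin\acleq(P)$, some coordinate $c_j$ of $c$ lies in $S$, and as $c_j\in\dcl(s_1,\ldots,s_{n-1},P)$ we get both $c_j\in\acl(s_1,\ldots,s_{n-1},P)$ and $\pi(c_j)\in\acl(\pi(s_1),\ldots,\pi(s_{n-1}))$, while the coset-freeness built into $D$ makes $\stp(\pi(s_1),\ldots,\pi(s_{n-1}),\pi(c_j))$ coset-free in $(P^n,\cdot)$. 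Then $(s_1,\ldots,s_{n-1},c_j)\in S^n$ contradicts Proposition \ref{P:CPIQG}. This proves the corollary; note that it gives the announced application, since a non-degenerated cover of a group of Lascar rank one with the CBP would, by Fact \ref{F:Ch}, transfer $P$-internality on quotients, so such a cover with elimination of imaginaries would contradict the corollary and hence cannot have the CBP.

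The heart of the argument, and the step I expect to be the main obstacle, is the construction of $D$ in the third paragraph: one must convert the bare failure of $S$ to be $P$-internal into a concrete finite correspondence whose projection through $\pi$ is \emph{non-affine}, i.e.\ not confined to a coset of a proper connected type-definable subgroup of a power of $G$. This is where both special hypotheses are genuinely used — the absence of algebraic points in the fibers ensures there is real non-$P$-internal content to exploit, and the assumption $\U(G)=1$ keeps the connected type-definable subgroups of the powers of $G$, hence the possible obstructions to coset-freeness, under control.
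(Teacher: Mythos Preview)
Your proposal has a genuine gap at exactly the point you flag: the construction of the finite set $D$. You do not specify $D$, and you acknowledge this is the main obstacle. More seriously, the strategy itself is aimed in the wrong direction. You want to produce a tuple whose $\pi$-image is \emph{coset-free}, so as to contradict Proposition~\ref{P:CPIQG} directly. But you have no control over the real tuple $c$ that elimination of finite imaginaries hands you: even if you arrange that ``a suitable enumeration of $D$'' has coset-free $\pi$-image, the coordinate $c_j$ you end up using is a coordinate of the \emph{canonical parameter} of $D$, not an element of $D$, and nothing in your setup forces $\stp(\alpha_1,\ldots,\alpha_{n-1},\pi(c_j))$ to be coset-free. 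In fact, in the most natural instance (see below) it is provably \emph{not} coset-free, so the direct contradiction you are hoping for cannot be obtained this way.

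The paper's proof avoids this entirely by running the argument in the opposite direction. One takes the simplest possible finite set, $D=\{a,b\}$ with $a,b$ independent generics of $(S,\odot)$, supposes it has a real canonical parameter $(c,\varepsilon)$, and first shows (using non-degeneracy and $a\ind a\odot b$) that some coordinate $\gamma_i$ of $\pi(c)$ has rank one over $\alpha\cdot\beta$. Since $c_i\in\dcl(a,b)\subseteq\acl(a,b,P)$, Proposition~\ref{P:CPIQG} now yields that $p=\stp(\alpha,\beta,\gamma_i)$ is \emph{not} coset-free: it is the generic type of an $\acleq(\emptyset)$-definable coset of some proper connected $H\le G^3$ of rank~$2$. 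The contradiction then comes from the symmetry of $\{a,b\}$: the automorphism swapping $a$ and $b$ fixes $c$, hence fixes that coset setwise, and a short computation with $H$ forces $(1_G,\alpha\cdot\beta,\gamma_i)$ to realize $p$, contradicting genericity of $\alpha$. The key idea you are missing is this use of Proposition~\ref{P:CPIQG} in the contrapositive together with the built-in $\mathbb Z/2$-symmetry of an unordered pair; no delicate construction of $D$ is needed.
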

\begin{proof}
Choose two independent realizations $a$ and $b$ of the principal 
generic type of the group $(S,\odot)$ over $\acleq(\emptyset)$. 
We will show that the finite set $\{a,b\}$ has no real canonical parameter
$(c,\varepsilon)$, 
where $c$ is a tuple in 
$S$ and $\varepsilon$ is a tuple in $P$. 
\begin{claim*}
	The projection $\pi(c)$ is not algebraic over $\pi(a\odot b)$.
\end{claim*}
\begin{claimproof*}
	Since $a$ and $b$ are independent generic elements, their projections $\alpha=\pi(a)$ and $\beta=\pi(b)$ are distinct. Hence, the 
	 element $a$ is the unique element of the set $\{a,b\}$ in the fiber of $\alpha$. Thus, the generic $a$ of $S$ is definable over $c,P$.
	If the tuple $\gamma=\pi(c)$ were algebraic over $\pi(a\odot b)$, the independence $a\ind 
	a\odot b$ yields that the type $\stp(a)$ is almost $P$-internal, 
	contradicting our assumption that the sort $S$ is not almost 
	$P$-internal.
\end{claimproof*}
	Choose a coordinate $\gamma_i$ of $\gamma$ of rank one over $\pi(a\odot b)=\alpha\cdot\beta$.
	By Proposition \ref{P:CPIQG}, the type 
	$p=\stp(\alpha,\beta,\gamma_i)$ is not coset-free.
	Hence, the tuple $(\alpha,\beta,\gamma_i)$ is contained in an 
	$\acleq(\emptyset)$-definable coset of a proper connected type-definable subgroup $H$ 
	of $G^3$. We first show that $H$ equals the 
	stabilizer of $p$. Note that 
	\[
	2=\U(\alpha,\beta,\gamma_i)\leq\U\big((\alpha,\beta,\gamma_i)\cdot 
	H\big)=\U(H)<3,
	\]
	since $H$ is a proper subgroup of $G^3$.
	Thus, the type $p$ is the unique generic type 
	of the $\acleq(\emptyset)$-definable coset 
	$(\alpha,\beta,\gamma_i)\cdot H$. Hence, the stabilizer of $p$ contains $H$ and so they are equal, as desired.

 Now,  stationarity of the principal generic type of $S$ yields that 
 \[(a,b)\stackrel{\text{st}}{\equiv}(b,a).\] Choose 
 an automorphism $\sigma$ fixing $\acleq(\emptyset)$ with 
 $\sigma(a,b)=(b,a)$, so $\sigma(c)=c$. Since \[
(\alpha,\beta,\gamma_i)\cdot H = 
\big(\sigma(\alpha),\sigma(\beta),\sigma(\gamma_i)\big)\cdot 
\sigma(H)=(\beta,\alpha,\gamma_i)\cdot H,\]
 the tuple
$(\beta^{-1}\cdot\alpha,\alpha^{-1}\cdot\beta,1_G)$ is contained in $H$. 
The two elements $\alpha^{-1}\cdot \beta$ and $\beta$ realize the 
same type over $\acleq(\emptyset)$ (the principal generic type of $G$), so we deduce that 
$(\beta,\beta^{-1},1_G)$ belongs to $H$. Hence, the tuple
$(1_G,\alpha\cdot\beta,\gamma_i)$ is contained in the coset 
\[(1_G,\alpha\cdot\beta,\gamma_i)\cdot H=(\beta,\alpha,\gamma_i)\cdot H=(\alpha,\beta,\gamma_i)\cdot H.\] 
Since 
$\U(1_G,\alpha\cdot\beta,\gamma_i)=2$, the tuple $(1_G,\alpha\cdot\beta,\gamma_i)$ realizes $p$, the unique generic type 
of the coset $(\alpha,\beta,\gamma_i)\cdot H$, which is a blatant 
contradiction to $\alpha$ being generic.
\end{proof}
\section{Uncountably categorical covers without the CBP}\label{S:NoCBP}
In this section, we restrict our attention to additive covers of the field 
of complex numbers, in analogy to the Example \ref{E:Cover}, in order to 
produce uncountably categorical structures 
without the CBP, similar to the counter-example of
Hrushovski, Palac\'in and Pillay \cite{HPP13}.

\begin{definition}
	An \emph{additive cover} of the complex numbers $\mathbb C$ is a cover 
	of the additive group $(\mathbb C,+, 
	\cdot)$ (equipped with 
	the full field structure), where the sort $S$ equals $\mathbb C \times 
	\mathbb C$ and such that:
	\begin{itemize}
\item The projection $\pi:S\rightarrow P$ maps a pair onto the first 
coordinate.
\item The (lifted) group operation on $S$ is the map
\[	\begin{array}{rccc}
\oplus: & S\times S & \rightarrow & S \\[2mm]
& \big((\alpha,a'),(\beta,b')\big)&\mapsto& (\alpha+\beta, 
a' + b').
\end{array}\]
\item The group action $\star$ of $P$ on $S$ is given by 
$\beta\star(\alpha,a')=(\alpha,a'+\beta)$.
	\end{itemize}
\end{definition} 

Note that every additive cover of the complex numbers is an uncountably categorical structure with $P$ the unique strongly minimal set up to non-orthogonality.

\begin{remark}\label{R:Aut}
Given an additive cover $\mathcal{M}$, there is a 
canonical embedding
\[\begin{array}{rll}
\Aut(\mathcal{M}/P)& \hookrightarrow & 
\{F:\mathbb{C}\rightarrow\mathbb{C} 
\text{ 
	additive} 
\}\\[1mm]
\sigma &\mapsto & F_\sigma
\end{array}\]
uniquely determined by the identity
$\sigma(x)=F_\sigma(\pi(x))\star x$. 

For the additive cover $\mathcal M_1=(S,P,\pi,\star,\oplus,\otimes)$ 
in Example \ref{E:Cover}, the above embedding yields an isomorphism
\[\Aut(\mathcal{M}_{1}/P)\xrightarrow{\sim}\{D:\mathbb{C}\rightarrow\mathbb{C}
\text{ 
	derivation}\}.\]
As every derivation induces an automorphism fixing $P$ pointwise, the sort 
$S$ is not almost $P$-internal 
\cite[Corollary 3.3]{HPP13}, so the cover $\mathcal{M}_1$ is non-degenerated.

It was shown in \cite[Proposition 6.1]{mL20} that the cover $\mathcal M_1$ does not 
transfer almost $P$-internality on quotients.
We will now sketch an alternative argument, which 
will serve as a leitmotif in the sequel: Given a generic element $a_1$ in 
$S$, the product $a_2=a_1\otimes a_1$ is algebraic (or actually, 
definable) over $a_1$. The type $\stp(\pi(a_1),\pi(a_2))$ is coset-free 
in the additive group $P\times P$, by the Example \ref{E:CosetFree}. 
Therefore Proposition \ref{P:CPIQG} immediately gives that $\mathcal{M}_1$ 
does not transfer $P$-internality on 
quotients.
\end{remark}

In the previous short argument, the pair $(a_1, a_2)$ is the generic 
element of the a binary predicate $R_1$, namely the graph of the squaring 
function. With this in mind, we introduce the relations $R_n$.

\begin{definition}\label{D:Mn}
Let $\mathcal{M}_{n}$ be the additive cover of the complex numbers with 
the additional relation
\begin{multline*}
R_n\big((\alpha_1,a_{1}'),\ldots(\alpha_{n+1},a_{n+1}')\big) \iff \\ 
\bigwedge_{i=1}^{n+1} \alpha_i = \alpha_{1}^{i} 
\textnormal{ and } a_{n+1}'=\sum_{i=1}^{n}\binom{n+1}{i}(-1)^{n-i}\ \alpha_{n+1-i}\ a_{i}'
\end{multline*}
on $S^{n+1}$.
\end{definition}

A short calculation
yields that the group $\Aut(\mathcal{M}_n/P)$ corresponds to the collection of 
functions
\[
D_n = \big\{F:\mathbb C \rightarrow \mathbb C \textnormal{ additive}\ |\ 
F(\alpha^{n+1})=\sum_{i=1}^{n}\binom{n+1}{i}(-1)^{n-i}\ \alpha^{n+1-i}\ 
F(\alpha^i)\big\}.
\]

It is straightforward to see that $D_1$ is exactly the collection of 
derivations on $\mathbb C$. 
We will see that the $n^{\textnormal{th}}$ 
iterate of a derivation is contained in $D_n$, which explains why its 
elements are referred to as \emph{higher order derivations} \cite{HKR98}.
The set $D_n$ is the closure of the additive 
subgroup generated by the maps which are compositions of at 
most $n$-many derivations with respect to the product topology on the 
space of all maps $\mathbb C\rightarrow \mathbb C$, where the image 
$\mathbb C$ carries the discrete topology \cite[Theorem 
1.1]{KL18}. In particular, the collection $D_n$ is strictly contained in 
$D_{n+1}$. For completeness, we will now provide a self-contained proof of 
the latter, which is a verbatim adaptation of \cite[Proposition 4.5]{bE15} 
using the translation of \cite[Satz]{HKR98}.

\begin{proposition}\label{P:AutM}
	For every natural number $n\geq 1$, the collection $D_n$ is 
	contained in $D_{n+1}$. Given a non-trivial derivation $D$ 
	on $\mathbb 
	C$, the composition \[ D^{n+1}=\underbrace{D\circ\cdots\circ D}_{n+1 \textnormal{ times}} 
	\]
	belongs to $D_{n+1}\setminus D_{n}$. 
\end{proposition}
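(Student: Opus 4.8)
First I would show $D_n \subseteq D_{n+1}$. The defining relation for $D_{n}$ is a single additive-functional identity expressing $F(\alpha^{n+1})$ as a fixed $\mathbb{Z}$-linear combination of the $F(\alpha^{i})$ with coefficients that are polynomials in $\alpha$. Since such identities are preserved under (closure of) the additive group they generate, and since the generators of $D_n$ are the $\le n$-fold compositions of derivations, it suffices to check that every $\le n$-fold composition of derivations satisfies the $(n+1)$-st identity; but an $m$-fold composition with $m\le n\le n+1$ is in particular an $\le(n+1)$-fold composition, so this will follow once we have established the characterization of $D_{n+1}$ in terms of compositions of derivations — which is exactly the content of \cite[Satz]{HKR98} (equivalently \cite[Theorem 1.1]{KL18}). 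The cleanest route is: invoke that $D_k$ equals the closure of the additive subgroup generated by $\le k$-fold compositions of derivations; then $D_n\subseteq D_{n+1}$ is immediate from the monotonicity of "$\le n$-fold" inside "$\le (n+1)$-fold."

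Next, that $D^{n+1}\in D_{n+1}$. I would verify by induction on $k$ that the $k$-th iterate $D^{k}$ of a derivation satisfies the $k$-th Leibniz-type identity, i.e. lies in $D_{k}$. The base case $D^{1}\in D_{1}$ is the Leibniz rule $D(\alpha\beta)=\alpha D(\beta)+\beta D(\alpha)$. For the inductive step one applies $D$ to the $k$-th identity for $D^{k}$ applied to $\alpha\cdot\alpha^{k}=\alpha^{k+1}$, or more transparently: iterating Leibniz gives the Faà di Bruno / binomial-type formula
\[
D^{k}(\alpha^{k+1}) = \sum_{i=1}^{k}\binom{k+1}{i}(-1)^{k-i}\,\alpha^{k+1-i}\,D^{k}(\alpha^{i}) + (\text{possibly}\ (k+1)!\,\alpha\,(D\alpha)^{k}\ \text{terms that must cancel}),
\]
and the point is that the specific combinatorial coefficients in Definition \ref{D:Mn} are precisely those for which the "extra" term vanishes — this is a purely formal identity in the polynomial ring, provable by induction using only the Leibniz rule, and is the translation of \cite[Satz]{HKR98}. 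So after fixing the right induction hypothesis this is a routine (if slightly fiddly) computation.

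Finally, and this is the real point, $D^{n+1}\notin D_{n}$ for non-trivial $D$. Here I would argue that membership in $D_{n}$ imposes a genuine algebraic constraint that $D^{n+1}$ violates. The concrete way: pick a transcendence basis and an element $t$ on which $D$ acts by $Dt=1$ (after normalizing), so that on the polynomial ring $\mathbb{Q}(t)$ the operator $D$ is honest differentiation $d/dt$. An element $F\in D_n$, being in the closed additive span of $\le n$-fold compositions of derivations, when evaluated on monomials $t^{m}$ produces polynomials in $t$ of degree $\ge m-n$ (each derivation lowers degree by at most $1$, and additivity/closure cannot create lower-degree behavior in a way that beats the worst generator); more precisely $F(t^{m})$ has $t$-adic valuation $\ge m-n$. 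But $D^{n+1}(t^{n+1}) = (d/dt)^{n+1}(t^{n+1}) = (n+1)!$, a nonzero constant, i.e. valuation $0 > (n+1)-(n+1)$... so instead I track it the other way: $D^{n}(t^{n})=n!\ne 0$ is the largest-order nonvanishing iterate on $t^{n}$, whereas any $F\in D_{n}$ with the defining relation for level $n$ would force a dependence among $F(t),\dots,F(t^{n})$ that $D^{n+1}$ fails, since $D^{n+1}(t)=\dots=D^{n+1}(t^{n})=0$ while $D^{n+1}(t^{n+1})=(n+1)!\ne 0$, contradicting the identity $F(\alpha^{n+1})=\sum_{i=1}^{n}\binom{n+1}{i}(-1)^{n-i}\alpha^{n+1-i}F(\alpha^{i})$ evaluated at $\alpha=t$. \textbf{The main obstacle} is making the separation argument fully rigorous without hand-waving about "closure": one must check that the functional identity defining $D_n$ is closed under the product topology (pointwise limits) so that it genuinely holds for \emph{all} of $D_n$, not just the dense subgroup — this is where citing \cite[Theorem 1.1]{KL18} / \cite[Proposition 4.5]{bE15} for the topological description of $D_n$ does the heavy lifting, and then the evaluation-at-a-transcendental $t$ with $D^{n+1}t^{n+1}\ne 0$ but $D^{n+1}t^{i}=0$ for $i\le n$ cleanly witnesses $D^{n+1}\notin D_{n}$.
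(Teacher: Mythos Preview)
Your argument for $D^{n+1}\notin D_n$ has a genuine gap. You assume that after a normalization one can pick $t\in\mathbb C$ with $Dt=1$ (or at least with $Dt$ a nonzero \emph{constant} of $D$), so that $D$ acts on $\mathbb Q[t]$ as $d/dt$ and the evaluation $D^{n+1}(t^i)=0$ for $i\le n$, $D^{n+1}(t^{n+1})=(n+1)!$ kills the $D_n$-identity. But such a $t$ need not exist. Take a transcendence basis $\{b_i\}$ of $\mathbb C$ over $\mathbb Q^{\mathrm{alg}}$ and define $D$ by $Db_i=b_i$. Then $\ker D$ is an algebraically closed field $K$ containing every ratio $b_i/b_j$, and $\mathbb C=K(b_1)^{\mathrm{alg}}$ with $D$ acting on $K(b_1)$ as the Euler operator $b_1\,\partial_{b_1}$. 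If some $t\in\mathbb C$ satisfied $Dt=1$, then taking the trace of $t$ down to $K(b_1)$ would produce $s\in K(b_1)$ with $Ds\in\mathbb Q^\times$; writing $s=f(b_1)$ this forces $f'(X)$ to be a nonzero scalar multiple of $1/X$, which has no antiderivative in $K(X)$. So for this nontrivial $D$ there is no element of $\mathbb C$ whose $D$-image is a nonzero constant, and your evaluation step cannot be carried out. (Dividing $D$ by a scalar $c$ does give another derivation, but $(D/c)^{n+1}\ne D^{n+1}/c^{n+1}$ unless $Dc=0$, so this does not rescue the normalization either.)

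The paper avoids this by a descent that uses no special point: assuming $D^{n+1}\in D_n$ and using the already-proved facts $D^m\in D_m$ and $D_m\subseteq D_{m+1}$, one compares two expansions of $D^{n+1-k}(x^{n+1-k})$ to obtain, for every $x$, the relation
\[
0=D(x)\,(n+1-k)\sum_{i=1}^{n-k}\binom{n-k}{i}(-1)^{n-k-i}x^{n-k-i}D^{n-k}(x^i),
\]
whence $D^{n-k}\in D_{n-k-1}$. Iterating down to $D\in D_0=\{0\}$ contradicts nontriviality. Your idea is in fact salvageable along these lines: for \emph{any} $\alpha$ with $D\alpha\ne 0$, the defect of the $D_n$-identity for $F=D^{n+1}$ at $\alpha$ turns out to be $(n+1)!\,(D\alpha)^{n+1}\ne 0$ (you can check $n=1,2$ by hand), but establishing this for all $n$ requires precisely the inductive manipulation the paper performs, not a pointwise shortcut.

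A secondary remark: for the inclusion $D_n\subseteq D_{n+1}$ you invoke the topological description of $D_n$ from \cite{KL18}, whereas the paper gives a self-contained proof by first multilinearizing the defining identity of $D_n$ (showing it is equivalent to an identity in $n+1$ independent variables), and then checking directly that this implies the $(n+2)$-variable identity. Since the proposition is advertised as self-contained, your reliance on the external characterization is a methodological mismatch even though it is logically valid.
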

\begin{proof}
	We first show that an additive function $F$ is in $D_n$ if and only if
	\[
	F(x_1 \cdots x_{n+1})=\sum_{k=1}^{n}(-1)^{k+1}\sum_{1\leq 
		i_1 < \ldots < i_k \leq n+1} x_{i_1}\cdots x_{i_k} 
	F(x_1\cdots \widehat x_{i_1}\cdots \widehat x_{i_k}\cdots x_{n+1}),
	\]
	where $\widehat x_{j}$ indicates that the variable $x_j$ is omitted. 
	One direction is immediate, setting $x_1=\ldots=x_{n+1}=\alpha$. For 
	the other direction, let 
	$G_{F,n}(x_1, 
	\ldots, x_{n+1})$ denote the right-hand side of the 
	above equality.
	For $F$ in $D_n$, we have by definition the identity
	\begin{multline*}
	F((x_1 + \ldots +
	x_{n+1})^{n+1})=\\ \sum_{i=1}^{n}\binom{n+1}{i}(-1)^{n-i}\ (x_1 + 
	\ldots 
	+x_{n+1})^{n+1-i}\ F((x_1 + \ldots +x_{n+1})^i).
	\end{multline*}
	Since the function $F$ is additive, expanding the left-hand-side, it 
	is immediate to isolate that the only term which is odd in all the 
	variables $x_i$'s is exactly \[(n+1)!\cdot F(x_1 \cdots x_{n+1}).\] 
	Comparing to the terms on the right-hand-side which are also  odd in all 
	the variables, we deduce by renaming $n+1-i=k$ and dividing by 
	$(n+1)!$ that 
	\[ F(x_1 \cdots x_{n+1})=G_{F,n}(x_1,\ldots, x_{n+1}),\] 
	as desired. 
	
	Using the above equivalence, we now show that the function $F$ in 
	$D_n$  belongs to $D_{n+1}$.	Indeed,
	\begin{multline*}
	F(x_1 \cdots x_{n+2})=F(x_1 \cdots (x_{n+1}x_{n+2}))=G_{F,n}(x_1, 
	\ldots, x_{n+1}\cdot 
	x_{n+2})
	= \\
	G_{F,n+1}(x_1, \ldots, x_{n+1},x_{n+2})
	-x_{n+2}F(x_1 \cdots x_{n+1})-x_{n+1}F(x_1 \cdots \widehat x_{n+1} 
	x_{n+2})+\\
	x_{n+2}G_{F,n}(x_1, \ldots, x_{n+1})+
	x_{n+1}G_{F,n}(x_1, 
	\ldots, \widehat x_{n+1},x_{n+2})
	=\\ G_{F,n+1}(x_1, \ldots, x_{n+1},x_{n+2})
	-x_{n+2}\big(F(x_1 \cdots x_{n+1})-G_{F,n}(x_1, \ldots, x_{n+1})\big)
	- \\ x_{n+1}\big(F(x_1 \cdots \widehat x_{n+1} x_{n+2})-G_{F,n}(x_1, 
	\ldots, \widehat x_{n+1},x_{n+2})\big)=\\
	G_{F,n+1}(x_1, \ldots, x_{n+1},x_{n+2})
	-x_{n+2}\cdot 0
	- x_{n+1}\cdot 0 = G_{F,n+1}(x_1, \ldots, x_{n+1},x_{n+2}),
	\end{multline*}
	so $F$ is in $D_{n+1}$.
	
	Fix now some derivation $D$ on the complex numbers. We show 
	inductively on $n$ that the 
	composition $D^{n+1}$ belongs to $D_{n+1}$. Note that
	\[ D^{n+1}(x^{n+2})
	=D\big(D^{n}(x^{n+2})\big)
	=D\Big( \sum_{i=1}^{n+1}\binom{n+2}{i}(-1)^{n+1-i}\ x^{n+2-i}\ 
	D^{n}(x^i)  \Big), 
		\] since $D^n$ belongs to $D_n\subseteq D_{n+1}$. Hence, 
\begin{multline*}
 D^{n+1}(x^{n+2})=	\sum_{i=1}^{n+1}\binom{n+2}{i}(-1)^{n+1-i}\
	\big(  D(x^{n+2-i}) D^{n}(x^i) + x^{n+2-i} D^{n+1}(x^i) \big) = \\
	\sum_{i=1}^{n+1}\binom{n+2}{i}(-1)^{n+1-i}\
	x^{n+2-i} D^{n+1}(x^i) + 
	\sum_{i=1}^{n+1}\binom{n+2}{i}(-1)^{n+1-i}\
	D(x^{n+2-i}) D^{n}(x^i).
	\end{multline*}
We need only show that the subsum
	\[
	\sum_{i=1}^{n+1}\binom{n+2}{i}(-1)^{n+1-i}\
	D(x^{n+2-i}) D^{n}(x^i)=0.
	\]
	Indeed,
	\begin{multline*}
	\sum_{i=1}^{n+1}\binom{n+2}{i}(-1)^{n+1-i}\
	D(x^{n+2-i}) D^{n}(x^i)=\\
	D(x) \sum_{i=1}^{n+1}\binom{n+2}{i}(-1)^{n+1-i}\
	(n+2-i)x^{n+1-i} D^{n}(x^i)=\\
	D(x) \sum_{i=1}^{n+1}\binom{n+2}{i}(n+2-i)(-1)^{n+1-i}\
	x^{n+1-i} D^{n}(x^i)=\\
	D(x) \sum_{i=1}^{n+1}\binom{n+1}{i}(n+2)(-1)^{n+1-i}\
	x^{n+1-i} D^{n}(x^i)=\\
	D(x)(n+2) \Big(D^{n}(x^{n+1}) -\sum_{i=1}^{n}\binom{n+1}{i}(-1)^{n-i}\
	x^{n+1-i} D^{n}(x^i)\Big)=0,
	\end{multline*}
	since $D^{n}$ is in $D_n$.
	
	Assuming  now that the	composition $D^{n+1}$ belongs to the 
	collection $D_{n}$, we will deduce that $D$ is the trivial derivation. 
	We show inductively on $k\le n$ that $D^{n+1-k}$ must be contained 
	in $D_{n-k}$. Since $D_0=\{ \mathbf{0}\}$, this yields that $D$ 
	is trivial. The calculations will be very similar to the above.
	We have
	\begin{multline*}
	\sum_{i=1}^{n-k}\binom{n+1-k}{i}(-1)^{n-k-i}\
	x^{n+1-k-i} D^{n+1-k}(x^i) =
	D^{n+1-k}(x^{n+1-k}) =  \\  D\big(D^{n-k}(x^{n+1-k})\big) =
	D\Big( \sum_{i=1}^{n-k}\binom{n+1-k}{i}(-1)^{n-k-i}\
	x^{n+1-k-i} D^{n-k}(x^i)  \Big) = \\
	\sum_{i=1}^{n-k}\binom{n+1-k}{i}(-1)^{n-k-i}\
	D(x^{n+1-k-i}) D^{n-k}(x^i) + \\ + 
	\sum_{i=1}^{n-k}\binom{n+1-k}{i}(-1)^{n-k-i}\
	x^{n+1-k-i} D^{n+1-k}(x^i).
	\end{multline*}
Therefore,
	\begin{multline*}
	0= \sum_{i=1}^{n-k}\binom{n+1-k}{i}(-1)^{n-k-i}\
	D(x^{n+1-k-i}) D^{n-k}(x^i) = \\
= 	D(x)(n+1-k) \sum_{i=1}^{n-k}\binom{n-k}{i}(-1)^{n-k-i}\
	x^{n-k-i} D^{n-k}(x^i).	
	\end{multline*}
	We conclude that (even in case that $D(x)=0$) the sum
	\[ \sum_{i=1}^{n-k}\binom{n-k}{i}(-1)^{n-k-i}\
	x^{n-k-i} D^{n-k}(x^i)=0, \] for all $x$ in $\mathbb C$, so $D^{n-k}$ 
	belongs to $D^{n-(k+1)}$, as desired.
\end{proof}

In particular, the covers $\mathcal{M}_n$ have pairwise 
distinct automorphism groups.  However, at the moment of writing, we do 
not know whether these additive covers are bi-interpretable, and more 
importantly bi-interpretable with $\mathcal{M}_1$. 
A possible way to tackle this question is in terms of 
\emph{transfer of internality on intersections}, a property which is 
already present in Chatzidakis' proof of Fact \ref{F:Ch}. In 
\cite[Proposition 6.3]{mL20} it is shown that the additive cover 
$\mathcal{M}_1$ does not transfer $P$-internality on intersections.

Before showing that the additive covers $\mathcal{M}_n$, for $n\ge 1$, do 
not  transfer $P$-internality on quotients, notice that the presentation 
of $\mathcal M_1$ in the Example \ref{E:Cover} coincides with the 
Definition \ref{D:Cover}, in the sense that these two structures have the 
same  same definable 
sets. Indeed, the relation
$R_1(a_1,a_2)$ holds if and only if $a_2=a_1 \otimes a_1$, so the 
formula
\[
\psi(a,b,z)=\exists z_1 \exists z_2 \exists z_3 \Big(R_1(a,z_1) \land 
R_1(b,z_2) \land R_1(a \oplus b,z_3) \land 
z\doteq\frac{z_3 - z_2 - z_1}{2}\Big)
\]
defines $a\otimes b$.  Note that each cover $\mathcal{M}_n$ is a 
(definitional) reduct of $\mathcal{M}_1$:

\begin{multline*}
R_n(a_1,\ldots,a_{n+1}) \iff \\ 
\exists\varepsilon_2,\ldots,\exists\varepsilon_{n+1}\in P \Big(
\bigwedge_{i=2}^{n+1} a_i = \varepsilon_i \star ( \underbrace{a_1 \otimes \cdots\otimes a_1}_{i \textnormal{ times}} ) \  \land  \\
\varepsilon_{n+1}=
\sum_{i=2}^{n}\binom{n+1}{i}(-1)^{n-i}\ \pi(a_1)^{n+1-i}\ \varepsilon_i  \Big)
\end{multline*}

We will reproduce now the argument sketched  in 
the Remark \ref{R:Aut} in order to show that these new covers  do not 
transfer $P$-internality on quotients.  
Note first that there is no real tuple $b$ in $\mathcal M_n$ 
such that  the types $\stp(b/A_1)$ and $\stp(b/A_1)$ are 
both almost $P$-internal for some 
sets $A_1$ and $A_2$, but the type 
\[
\stp(b/\acleq(A_1)\cap\acleq(A_2))
\] is not almost $P$-internal. This is because the tuple
$\pi(b)$ has to be algebraic over $A_1$ and over $A_2$, 
by \cite[Corollary 3.3]{HPP13}, since every 
derivation induces an automorphism in $\Aut(\mathcal M_n/P)$.

\begin{proposition}\label{P:CPIQM}
	For each $n\geq 1$, the additive cover ${\mathcal M}_n$ does not transfer $P$-internality on 
quotients. In particular, the CBP does not hold in $\mathcal{M}_n$. 
\end{proposition}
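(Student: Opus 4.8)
The plan is to deduce the statement directly from Proposition \ref{P:CPIQG}, by exhibiting an explicit tuple whose existence that proposition excludes under transfer of $P$-internality on quotients; this generalizes verbatim the argument sketched in Remark \ref{R:Aut} for $\mathcal{M}_1$ (where the relevant tuple was $(a_1, a_1\otimes a_1)$). Since Proposition \ref{P:CPIQG} says that transfer of $P$-internality on quotients forbids any tuple $a$ in $S^{m}$ with $\stp(\pi(a))$ coset-free in $(P^{m},\cdot)$ and last coordinate algebraic over the remaining ones together with $P$, it is enough to produce one such tuple, with $m = n+1$.

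First I would fix a generic element $a_1$ of $S$, so that $\alpha := \pi(a_1)$ is transcendental, and, using surjectivity of $\pi$, choose for $2\le i\le n$ some $a_i \in S$ with $\pi(a_i) = \alpha^{i}$ (for instance $a_i = (\alpha^{i}, 0)$). I then let $a_{n+1}$ be the element of the fiber $\pi^{-1}(\alpha^{n+1})$ whose second coordinate equals $\sum_{i=1}^{n}\binom{n+1}{i}(-1)^{n-i}\,\alpha^{\,n+1-i}\, a_i'$, with $a_i'$ the second coordinate of $a_i$; by Definition \ref{D:Mn} this makes $R_n(a_1,\dots,a_{n+1})$ hold. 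Setting $a := (a_1,\dots,a_{n+1})$, two things must be checked. On the one hand, $\pi(a) = (\alpha, \alpha^{2}, \dots, \alpha^{n+1})$, and since $P$ is stably embedded carrying the full field structure of $\mathbb{C}$, every type-definable connected subgroup of $(P^{n+1},\cdot) = (\mathbb{C}^{n+1},+)$ and every $\acleq(\emptyset)$-type-definable coset of it is a linear subvariety over $\mathbb{Q}^{\mathrm{alg}}$; hence the transcendence of $\alpha$ shows $\stp(\pi(a))$ is coset-free exactly as in Example \ref{E:CosetFree}, applied with $n+1$ in place of $n$. On the other hand, the formula $R_n(a_1,\dots,a_n,y)$ with parameters $a_1,\dots,a_n$ has $a_{n+1}$ as its only solution: since $\pi(a_i) = \alpha^{i}$ for $i\le n$, any $y = (\beta, b')$ realizing it must satisfy $\beta = \alpha^{n+1}$ and must have $b'$ equal to the prescribed sum, which is fixed once $a_1,\dots,a_n$ are given. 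Therefore $a_{n+1}\in\dcl(a_1,\dots,a_n)\subseteq\acleq(a_1,\dots,a_n,P)$, and $a$ is a tuple of the forbidden type, so $\mathcal{M}_n$ does not transfer $P$-internality on quotients.

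For the last assertion I would argue by contradiction: if the CBP held in $\mathcal{M}_n$, then, because $P$ is strongly minimal and so almost $P$-internality agrees with almost internality with respect to the $\emptyset$-invariant family of rank-one types concentrated on $P$, Fact \ref{F:Ch} would give transfer of $P$-internality on quotients, contradicting the first part. I do not anticipate a real obstacle: all the weight is borne by Proposition \ref{P:CPIQG} and Example \ref{E:CosetFree}, and the only points needing a little attention are the index shift from $n$ to $n+1$, the verification that $R_n$ genuinely pins $a_{n+1}$ down over $a_1,\dots,a_n$ once the first coordinates have been arranged as successive powers of $\alpha$, and the routine reduction of coset-freeness in $\mathcal{M}_n$ to coset-freeness in the pure field using stable embeddedness.
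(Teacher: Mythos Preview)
Your proposal is correct and follows essentially the same route as the paper: exhibit a realization of $R_n$ with $\pi(a)=(\alpha,\alpha^2,\dots,\alpha^{n+1})$, invoke Example~\ref{E:CosetFree} for coset-freeness, observe that the last coordinate is definable from the others, and apply Proposition~\ref{P:CPIQG}. The only point you leave implicit is that $\mathcal{M}_n$ is non-degenerated (needed for Proposition~\ref{P:CPIQG}); the paper records this by noting that $\mathcal{M}_n$ is a reduct of $\mathcal{M}_1$, so $S$ remains not almost $P$-internal.
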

Note that the so-called group version of the CBP \cite[Theorem 4.1]{KP06} yields a more direct argument for the failure of the CBP since
the stabilizer of a generic realization of $R_n$ is trivial.
\begin{proof}
Note that the sort $S$ is not almost $P$-internal, since $\mathcal M_n$ is a reduct 
of $\mathcal M_1$.
Choose a realization $a=(a_1,\ldots,a_n)$ of the relation $R_n$ with generic 
projection $\alpha=\pi(a_1)$. The 
element $a_n$ is definable over $a_1,\ldots,a_{n-1}, P$, by construction, 
since $D_n$ is (isomorphic to) $\Aut(\mathcal{M}_n/P)$. The type
$\stp(\pi(a))$ is coset-free by the Example \ref{E:CosetFree}. Hence, the 
result is a direct application of the Proposition \ref{P:CPIQG}. 
\end{proof}

Since the additive 
cover 
$\mathcal{M}_1$ eliminates finite imaginaries \cite[Corollary 4.5]{mL20},  Corollary \ref{C:FiniteImagCovers} yields another proof for the failure of the transfer of internality on quotients (and thus of the CBP) for $\mathcal{M}_1$. We will now conclude this work by showing that none of the additive covers $\mathcal{M}_n$, with $n>1$,  eliminate finite imaginaries. 
The main step consists in showing that subgroups of $G^n$ of the form \[ 
G(a/P)= \{ (g_1,\ldots, g_n) 
\in G^n \ | \  
(g_1\star a_1,\ldots,g_n\star a_n) \equiv_{P} (a_1,\ldots,a_n) 
\}\] for a suitable tuple $a=(a_1, \ldots, a_n)$ of $S^n$ will be 
$\acleq(\emptyset)$-definable. For additive covers, these definable groups 
are linear. Nonetheless, we will 
circumvent this description in the first part of the proof in order to 
provide a general criteria 
which can be easily adapted to arbitrary non-degenerated covers of a group of finite Morley rank. 

\begin{proposition}
	For $n>1$, the addive cover $\mathcal{M}_n$ does not eliminate finite 
	imaginaries.
\end{proposition}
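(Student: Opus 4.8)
The plan is to follow the blueprint of Corollary \ref{C:FiniteImagCovers}, but without invoking the transfer of $P$-internality on quotients (which fails for $\mathcal{M}_n$). The key replacement is that, for these particular covers, the relevant ``stabilizer'' groups $G(a/P)$ are honestly $\acleq(\emptyset)$-definable and, being subgroups of a vector group, are \emph{linear}, so we can compute with them directly. Concretely, I would fix two independent realizations $a=(a_1,a_1')$ and $b=(b_1,b_1')$ of the principal generic type of $(S,\oplus)$ over $\acleq(\emptyset)$, form the generic realization $(a, a\otimes a, \ldots)$ of $R_n$ built from $a$ (and similarly from $b$), and suppose toward a contradiction that the finite set $\{a,b\}$ has a real canonical parameter $(c,\varepsilon)$ with $c$ a tuple in $S$ and $\varepsilon$ a tuple in $P$.

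First I would establish the analogue of the Claim in the proof of Corollary \ref{C:FiniteImagCovers}: since $\alpha=\pi(a_1)$ and $\beta=\pi(b_1)$ are distinct (both generic, independent), the element $a$ is the unique point of $\{a,b\}$ in the fiber over $\alpha$, hence $a$ is definable over $c,P$; if $\pi(c)$ were algebraic over $\pi(a\oplus b)=\alpha+\beta$, the independence $a\ind a\oplus b$ would force $\stp(a)$ almost $P$-internal, contradicting non-degeneracy. So $\pi(c)$ is not algebraic over $\alpha+\beta$, and I can pick a coordinate $\gamma$ of $\pi(c)$ with $\U(\gamma/\alpha+\beta)=1$. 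The point where things now differ from the Corollary: instead of quoting Proposition \ref{P:CPIQG}, I would introduce the group
\[
G(a/P)=\{(g_1,\ldots,g_n)\in G^n \mid (g_1\star a_1,\ldots,g_n\star a_n)\equiv_P (a_1,\ldots,a_n)\}
\]
for the tuple $a=(a_1,\ldots,a_n)$ realizing $R_n$, show (using stable embeddedness of $P$ and the fact that $\sigma(a)$ always lies in the $\star$-orbit of $a$ for $\sigma$ fixing $P$) that $G(a/P)$ is $\acleq(\emptyset)$-definable, and note it is a linear subgroup of $(\mathbb{C}^n,+)$ because the $\star$-action on an additive cover is by translation and the $R_n$-relation is linear. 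This gives a concrete description of $G(a/P)$ (its defining linear equations are determined by the defining equation of $R_n$), and in particular lets me control ranks.

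Then I would run the symmetry argument of Corollary \ref{C:FiniteImagCovers}: stationarity of the principal generic type of $S$ gives $(a,b)\stackrel{\text{st}}{\equiv}(b,a)$, so some $\sigma$ fixing $\acleq(\emptyset)$ swaps them and fixes $c$, hence fixes $\gamma$ and the $\acleq(\emptyset)$-definable coset/group data; comparing $(\alpha,\beta,\gamma,\ldots)\cdot H$ with $(\beta,\alpha,\gamma,\ldots)\cdot H$ (where $H$ is the connected stabilizer of the generic type $p$ of the relevant tuple in $G^{n+1}$, shown to be a proper subgroup exactly as in the Corollary via a rank count $\U(p)<n+1$) forces a symmetry element into $H$, and replacing $\alpha^{-1}\cdot\beta$ by $\beta$ (same generic type) yields a tuple like $(1_G,\alpha+\beta,\gamma,\ldots)$ in the coset, which would have to realize $p$ — contradicting genericity of $\alpha$. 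The one genuinely new ingredient over the Corollary is producing enough coordinates/equations so that the ``$n>1$'' hypothesis is used: the linear description of $G(a/P)$ must be shown to be \emph{nontrivial} (it is, precisely because $a_n$ is a nontrivial linear combination of $a_1',\ldots,a_{n-1}'$ over $P$ when $n>1$), so that the coset $(\alpha,\beta,\gamma,\ldots)\cdot H$ is indeed a proper coset with a unique generic — for $n=1$ this collapses, which is why $\mathcal{M}_1$ behaves differently and in fact \emph{does} eliminate finite imaginaries.

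The main obstacle I expect is the $\acleq(\emptyset)$-definability of $G(a/P)$ and extracting from it the correct proper connected subgroup $H$ of $G^{n+1}$ together with the rank inequality $\U(H)<n+1$: one must be careful that the tuple whose generic type is $p$ has Lascar rank exactly $\U$ of the generic of $S$ restricted appropriately (here $\U(\alpha)=\U(\beta)=\U(\gamma)=1$ over the base, but with the $R_n$-coordinates of $a$ determined by $a_1$, the ambient group in which $H$ sits and the rank bound have to be pinned down precisely so that ``$p$ is the unique generic of its coset'' goes through exactly as in Corollary \ref{C:FiniteImagCovers}). Once that linear-algebraic bookkeeping is in place, the symmetry/contradiction step is a verbatim adaptation of the proof of Corollary \ref{C:FiniteImagCovers}.
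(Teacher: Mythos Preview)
Your proposal has a genuine gap at the point where it diverges from Corollary~\ref{C:FiniteImagCovers}. In that corollary, the fact that $p=\stp(\alpha,\beta,\gamma_i)$ is \emph{not} coset-free (equivalently, that the stabilizer $H$ is a proper subgroup of $G^3$) is obtained from Proposition~\ref{P:CPIQG}, which in turn uses transfer of $P$-internality on quotients. You propose to replace this by the $\acleq(\emptyset)$-definability of $G(a/P)$ for an $R_n$-realization $a$, but that claim is false: the defining linear equations of $G(a/P)$ involve $\pi(a_1)$ (for instance, for $n=1$ one computes $G(a/P)=\{(t,2\alpha t):t\in\mathbb C\}$), so the group is only $P$-definable, not $\acleq(\emptyset)$-definable. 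More importantly, even a correct description of $G(a/P)$ says nothing about the stabilizer of $\stp(\alpha,\beta,\gamma_i)$ in $G^3$; there is no bridge from one to the other in your outline. Finally, your explanation of where $n>1$ enters is incorrect: the linear constraint defining $R_n$ is already nontrivial for $n=1$, so nothing ``collapses'' there by your criterion---yet $\mathcal M_1$ \emph{does} eliminate finite imaginaries.

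The paper's argument is structurally different from what you propose. It works with the groups $G(a,b,c_j/P)\le G^3$ (one for each coordinate $c_j$ of the putative canonical parameter) and proceeds in two opposing directions. First, a general symmetry argument shows that if \emph{every} $G(a,b,c_j/P)$ were $\acleq(\emptyset)$-definable, then the swap $(a,b)\mapsto(b,a)$ would force these groups to be symmetric in the first two coordinates, and one could then manufacture an automorphism fixing $c$ while moving $a$ outside $\{a,b\}$---contradiction. So some $G(a,b,c_j/P)$ is \emph{not} $\acleq(\emptyset)$-definable. Second, and this is where $n>1$ is actually used, one shows by an explicit computation with derivations that every defining linear equation $\varepsilon_1 x_1+\varepsilon_2 x_2+\varepsilon_3 x_3=0$ of $G(a,b,c_j/P)$ has coefficients in $\mathbb Q^{\mathrm{alg}}$: choosing derivations $D_1,D_2$ with prescribed kernels, one uses that both $D_1$ and the \emph{composition} $D_2\circ D_1$ lie in $D_n\cong\Aut(\mathcal M_n/P)$ (which requires $n\ge 2$) to obtain two equations whose comparison forces $D_2(\varepsilon_i)=0$. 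This yields $\acleq(\emptyset)$-definability after all, contradicting the first step.
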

\begin{proof}
	Assume for a contradiction that the finite set $\{a,b\}$ with $a$ and $b$ generic independent elements of $S$ has a real canonical parameter $(c,\varepsilon)$, 
	where $c=(c_i)_{i\leq m}$ is a tuple in 
	$S$ and $\varepsilon$ is a tuple in $P$.
	\begin{claim*}
	There is an index $j\leq m$ such that the group $G(a,b,c_j /P)$
	is not definable over $\acleq(\emptyset)$.
	\end{claim*}
	\begin{claimproof*}
		Suppose otherwise that all such groups are $\acleq(\emptyset)$-definable and
		choose an elementary substructure 
	$N$ of the cover $\mathcal{M}_n$ independent from $a,b$. We first 
	show 
	that for each $i\leq m$
		\[
		N\models \forall g_1   \forall g_2 \forall g_3 
		\big(
		(g_1,g_2,g_3)\in G(a,b,c_i /P)\rightarrow (g_2,g_1,g_3)\in 
		G(a,b,c_i 
	/P)
		\big).
		\]
		Given $(g_1,g_2,g_3)$ in $G(a,b,c_i /P)\cap N$ there is an 
	automorphism $\sigma$ in $\Aut(\mathcal M/P)$ with $\sigma(a)=g_1 
	\star 
	a,\sigma(b)=g_2 \star b$ and $\sigma(c)=g_3 \star c$. Note that 
	$(a,b) \equiv_N (b,a)$, so choose an automorphism $\tau$  
	fixing $N$ with $\tau(a,b)=(b,a)$. The tuple $c$ is fixed by $\tau$
	and the conjugate $\tau^{-1}\circ\sigma\circ\tau$ belongs to 
	$\Aut(\mathcal M /P)$. Since
		\[
		\tau^{-1}\circ\sigma\circ\tau (a)=g_2 \star a, \ \
		\tau^{-1}\circ\sigma\circ\tau (b)=g_1 \star b
		\ \ \textnormal{ and }\ \ 
		\tau^{-1}\circ\sigma\circ\tau (c)=g_3 \star c,
		\]
		the triple $(g_2,g_1,g_3)$ belongs to $G(a,b,c_i/P)$, as desired. 
		
		As $N$ was chosen to be an elementary substructure, we deduce that for each $i\leq m$
		\[
		\mathcal M \models \forall g_1   \forall g_2 \forall g_3 
		\big(
		(g_1,g_2,g_3)\in G(a,b,c_i /P)\rightarrow (g_2,g_1,g_3)\in 
		G(a,b,c_i 
	/P)
		\big).
		\]
		Now,  the sort $S$ is not $P$-internal, so $a$ is not definable over $b, P$. Hence, there is an automorphism 
	$\rho$ in $\Aut(\mathcal M /P)$ fixing $b$ with $\rho(a)=g\star a\neq a$. Set $\rho(c)=h \star c$.
	By the above, both tuples
		$(g,1_G,h)$ and $(1_G,g,h)$ belong to each $G(a,b,c_i /P)$, and hence so 
		does 	$(g,g^{-1},1_G)$. Therefore, there exists an automorphism $\rho_1$ 
		in $\Aut(\mathcal{M}/P)$ with $\rho_1(c)=c$ and 
		\[\rho_1(a)=g\star a.\]
	The element $g\star a$ does not lie in the fiber of $\pi(b)$ and differs from $a$, so $\rho_1(a)=g\star a$ is not contained in the set $\{a,b\}$, which gives the desired contradiction. 
		\end{claimproof*}
	In order to get the final contradiction, we will now use the description of the group $G(a,b,c_j /P)$ as a definable subgroup of $(\mathbb{C}^3,+)$. Every such 
	subgroup is given by a system of linear equations
	\[
	\varepsilon_1 x_1 + \varepsilon_2 x_2+ \varepsilon_3 x_3 = 0
	\]
	for some complex numbers $\varepsilon_i$. We will show that the 
	coefficients in each one of these linear equations can be taken in 
	$\mathbb{Q}^{\textnormal{alg}}$, which gives the desired 
	contradiction. Choose hence an arbitrary non-trivial linear equation 
	$\varepsilon_1 x_1 + \varepsilon_2 x_2+ \varepsilon_3 x_3 = 0$ which 
	occurs in the above system defining $G(a,b,c_j /P)$. Note that 
 $\varepsilon_3$ 
	cannot be zero, for otherwise (up to permutation of $a$ and $b$) every 
	automorphism fixing $a$ and $P$ would fix $b$, contradicting that the 
	cover $\mathcal{M}_n$ is non-degenerated. Therefore, we may normalize 
	and assume 
	that $\varepsilon_3=1$. 
	
	Set now $\alpha=\pi(a), \beta=\pi(b)$ and $\gamma=\pi(c_j)$. Choose 
	derivations $D_1$ and $D_2$ with $D_1(\alpha)=0\neq D_1(\beta)$ and 
	the kernel of $D_2$ is exactly $\mathbb{Q}^{\textnormal{alg}}$.
	As noted before Proposition \ref{P:AutM}, both additive maps $D_1$ and $D_2 \circ D_1$ are contained in the image
	of
	the canonical embedding 
	\[\Aut(\mathcal{M}_n/P)\hookrightarrow  
	\{F:\mathbb{C}\rightarrow\mathbb{C} 
	\text{ 
		additive} \},
	\]
	since $n>1$, so there are automorphisms $\sigma_1$ and $\tau$ in 
	$\Aut(\mathcal{M}_n/P)$ with
	\[
	\sigma_1(a)=D_1(\alpha) \star a=a \ \
	\sigma_1(b)=D_1(\beta) \star b \neq b
	\ \ \textnormal{ and }\ \ 
	\sigma_1(c_j)=D_1(\gamma) \star c
	\]
	and
\[
\tau(a)=D_2(D_1(\alpha)) \star a=a \ \
\tau(b)=D_2(D_1(\beta)) \star b
\ \ \textnormal{ and }\ \ 
\tau(c_j)=D_2(D_1(\gamma)) \star c.
\]
The above description of the group $G(a,b,c_j/P)$ yields
\[
	\varepsilon_1 D_1(\alpha) + \varepsilon_2 D_1(\beta)+ D_1(\gamma) =   
	\varepsilon_2 D_1(\beta)+ D_1(\gamma) = 0. \tag*{($\spadesuit$)}
\] 
Similarly,
\[
\varepsilon_1 D_2(D_1(\alpha)) + \varepsilon_2 D_2(D_1(\beta))+ 
D_2(D_1(\gamma)) =  \varepsilon_2 D_2(D_1(\beta))+ 
D_2(D_1(\gamma))= 0,
\] 
Differentiating ($\spadesuit$) with respect to $D_2$, we conclude from the 
above equation that 
\[
0=D_2(\varepsilon_2) D_1(\beta),
\]	
so $D_2(\varepsilon_2)=0$ and thus $\varepsilon_2$ lies in 
$\mathbb{Q}^{\textnormal{alg}}$.  Replacing now $D_1$ by a derivation 
$\tilde{D}_1$ with $\tilde{D}_1(\beta)=0\neq \tilde{D}_1(\alpha)$ we 
conclude that $\varepsilon_1$ is an algebraic number as well. 
\end{proof}

\end{document}